\newtheorem{lemma}{Lemma}[section]
\newtheorem{theorem}{Theorem}[section]
\newtheorem{remark}{Remark}[section]
\newtheorem{definition}{Definition}[section]
\newtheorem{proposition}{Proposition}[section]
\newtheorem{assumption}{Assumption}[section]
\begin{document}

\title{Efficient Single-Loop Stochastic Algorithms for Nonconvex-Concave Minimax Optimization}

\author{Xia Jiang, Linglingzhi Zhu, Taoli Zheng, and Anthony Man-Cho So,~\IEEEmembership{Fellow,~IEEE}
\thanks{X. Jiang (xiajiang@cuhk.edu.hk), L. Zhu (llzzhu@se.cuhk.edu.hk), T. Zheng (tlzheng@se.cuhk.edu.hk), and A. M.-C. So (manchoso@se.cuhk.edu.hk) are with Department of Systems Engineering and Engineering Management at The Chinese University of Hong Kong, Hong Kong. This work is supported in part by the Hong Kong Research Grants Council (RGC) General Research Fund (GRF) project CUHK 14204823. }
}

\maketitle

\begin{abstract}
 Nonconvex-concave (NC-C) finite-sum minimax problems have wide applications in signal processing and machine learning tasks. Conventional stochastic gradient algorithms, which rely on uniform sampling for gradient estimation, often suffer from slow convergence rates and require bounded variance assumptions. While variance reduction techniques can significantly improve the convergence of stochastic algorithms, the inherent nonsmooth nature of NC-C problems makes it challenging to design effective variance reduction techniques. To address this challenge, we develop a novel probabilistic variance reduction scheme and propose a single-loop stochastic gradient algorithm called the probabilistic variance-reduced smoothed gradient descent-ascent (PVR-SGDA) algorithm. The proposed PVR-SGDA algorithm achieves an iteration complexity of $\mathcal{O}(\epsilon^{-4})$, surpassing the best-known rates of stochastic algorithms for NC-C minimax problems and matching the performance of state-of-the-art deterministic algorithms. Furthermore, to completely eliminate the need for full gradient computation and reduce the gradient complexity, we explore another variance reduction technique with auxiliary gradient trackers and propose a smoothed gradient descent-ascent algorithm without full gradient calculation, called ZeroSARAH-SGDA, for NC-C problems. The ZeroSARAH-SGDA algorithm achieves a comparable iteration complexity to PVR-SGDA, while reducing the gradient oracle calls at each iteration. Finally, we demonstrate the effectiveness of the proposed two algorithms through numerical simulations.

\end{abstract}

\begin{IEEEkeywords}
nonconvex-concave optimization, variance reduction, single-loop, stochastic algorithm
\end{IEEEkeywords}

%
\IEEEpeerreviewmaketitle

\section{Introduction}
%
%
%
%
\IEEEPARstart{I}{n} recent years, various applications in decentralized optimization \cite{Distributed_Sadd}, (distributionally) robust optimization \cite{ben2009robust,delage2010distributionally,kuhn2019wasserstein}, and reinforcement learning \cite{littman1994markov,dai2018sbeed,zhang2021multi} have underscored the need to tackle nonconvex-concave (NC-C) smooth minimax problems.
While the ultimate objective is to train models that perform well to unseen data, in practice, we deal with a finite dataset during training. This leads to the finite-sum minimax problem addressed in this paper:
\begin{align}\label{opti_pro}
\min_{x\in \mathcal{X}}\max_{y\in \mathcal{Y}} f(x,y):=\frac{1}{n}\sum_{i=1}^n f_i(x,y),
\end{align}
where $f:\mathbb{R}^n\times \mathbb{R}^d\to \mathbb{R}$ can be nonconvex with respect to $x$ but concave with respect to $y$, $f_i$ refers to the cost function associated with the $i$-th sample of a finite training dataset, and $\mathcal{X}\subseteq \mathbb{R}^n$, $\mathcal{Y}\subseteq \mathbb{R}^d$ are nonempty convex compact sets. 
Decentralized nonconvex optimization problems prevalent in multi-agent networks, such as smart grids, UAV swarms, and intelligent transportation systems, can be systematically reformulated into NC-C minimax problems through Lagrangian duality theory \cite{Distributed_Sadd}.
Consequently, developing efficient algorithms for NC-C minimax problems is essential to address the optimization challenges inherent in various scenarios.



Stochastic first-order algorithms have attracted significant research interest for solving minimax problems due to their scalability and efficiency. Among these, the stochastic gradient descent-ascent (StocGDA) algorithm is the most widely used, extending the stochastic gradient descent approach to the minimax setting. For NC-C cases, the work \cite{SGDA} introduced a stochastic variant of the two-timescale GDA algorithm \cite{two_GDA}, which employs unequal step sizes and provides a non-asymptotic convergence guarantee.
On another front, multi-loop type algorithms with acceleration in the subproblems \cite{yang2020catalyst} have advantages over GDA variants in terms of iteration complexity for general NC-C problems. Both of these algorithms emphasize the importance of updating $y$ more frequently than $x$ for solving minimax problems, while the two-timescale StocGDA is relatively easier to implement and generally demonstrates superior empirical performance compared to the multi-loop type algorithms.


To further capitalize on the performance advantages of two-timescale StocGDA, the work \cite{alter_prox_nc_c} explored the favorable convergence properties of the alternating version of the two-timescale GDA by proposing a stochastic alternating proximal gradient algorithm. Additionally, the work \cite{zhang2022sapd} introduced a stochastic algorithm based on the inexact proximal point method with unequal step sizes to address NC-C minimax problems.  
Both stochastic algorithms presented in \cite{alter_prox_nc_c,zhang2022sapd} 
require an iteration complexity of $\mathcal{O}(\epsilon^{-6})$ to solve NC-C minimax problems.

While these stochastic approaches have established non-asymptotic convergence guarantees for NC-C minimax problems, they generally exhibit significantly slower convergence rates compared to deterministic methods and require the additional assumption of bounded gradient variance. One effective technique to enhance the convergence performance of stochastic optimization algorithms is variance reduction. For nonconvex minimization problems, extensive research has demonstrated the effectiveness of variance reduction techniques in improving computational complexity and achieving faster convergence rates, with notable examples including the stochastic path integrated differential estimator (SPIDER) \cite{spider_paper} and the stochastic recursive momentum (STORM) algorithm \cite{NEURIPS2019_b8002139}.
In the realm of minimax optimization, 
the works \cite{zhang2022sapd,SREDA_nips2020} introduced double-loop variance-reduced stochastic algorithms based on the SPIDER technique for nonconvex-strongly concave (NC-SC) minimax optimization, which necessitate large batch sizes during each periodic iteration. 
To the best of our knowledge, there are currently no efficient variance-reduced stochastic algorithms with low iteration complexity available for addressing the NC-C minimax problems to
enhance the convergence performance of StocGDA algorithms.

Compared to NC-SC minimax problems, NC-C minimax problems are more challenging to solve due to the nonsmoothness introduced by the nonunique dual solutions resulting from concavity. Classic variance reduction methods, such as SVRG and SAGA, are not applicable to nonsmooth loss functions \cite{pmlr-v139-song21d}. Additionally, techniques like SPIDER and STORM suffer from unavoidable variance in the stochastic gradient estimators, which impedes the assurance of recursive gradient descent for NC-C minimax problems.

To address the challenge of nonsmoothness in gradient estimation, it is crucial to carefully select and analyze an appropriate variance reduction technique. The contributions of this paper are summarized as follows.
\begin{itemize}
    \item To eliminate variance terms in stochastic gradient estimators and ensure the recursive gradient descent property, we develop a probability-based gradient updating scheme and propose a novel probabilistic variance-reduced smoothed gradient descent-ascent (PVR-SGDA) algorithm for solving NC-C minimax problems. By integrating the probabilistic variance reduction technique with Moreau-Yosida smoothing acceleration, the proposed PVR-SGDA algorithm achieves an iteration complexity of $\mathcal{O}(\epsilon^{-4})$, where $\epsilon$ denotes the desired optimization accuracy. This result represents a significant improvement over the $\mathcal{O}(\epsilon^{-6})$ complexity of stochastic algorithms in \cite{zhang2022sapd} for NC-C minimax problems.
    \item To further reduce the gradient complexity and completely eliminate the need for full gradient computations in PVR-SGDA, we investigate a variance reduction technique known as ZeroSARAH, as introduced in \cite{Li2021ZeroSARAHEN}. Building on this approach, we propose another smoothed gradient descent-ascent (ZeroSARAH-SGDA) algorithm tailored for NC-C minimax problems. The proposed ZeroSARAH-SGDA algorithm matches the iteration complexity of PVR-SGDA while significantly reducing the gradient oracle calls.  However, this improvement comes at the expense of a  more intricate algorithmic framework, which necessitates additional storage space to manage and update auxiliary gradient tracking variables. 
    \item The two proposed stochastic algorithms not only achieve the best-known iteration complexity of deterministic counterparts for NC-C minimax optimization \cite{zhang2020single}, but also significantly reduce gradient computation costs through stochastic gradient estimation. In addition, in contrast to existing stochastic algorithms that rely on bounded gradient variance \cite{alter_prox_nc_c, SGDA}, as well as multi-loop variance-reduced stochastic algorithms \cite{SREDA_nips2020,yang2020catalyst}, both proposed stochastic algorithms  adopt the single-loop structure, which simplifies implementation, and remain robust to gradient variance.
\end{itemize}
\par The notation we use in this paper is standard. We use $[n]$ to denote the set $\{1,2,\ldots,n\}$ for any positive integer $n$. We use $I_d$ to denote a $d\times d$ identity matrix and $\otimes$ to denote the Kronecker product. Let the Euclidean space of all real vectors be equipped with the inner product $\langle x,y\rangle:=x^{\top}y$ for any vectors $x, y$ and $\|\cdot\|$ denote the induced norm. 
For a differentiable function $f$, the gradient of $f$ is denoted as $\nabla f$.
\section{Motivating Applications}
The nonconvex-concave minimax problems have wide applications in the control and optimization field. We provide two representative ones on signal processing and power control. 
 \subsection{Distributed optimization over multi-agent networks}
\par The popular optimal consensus problem and resource allocation problem in networked systems can be formulated as NC-C minimax problems \cite{Distributed_Sadd}. Consider a connected network graph $\mathcal{G}$ of $N$ agents, where each agent $i\in [N]$ only has access to local objective function $f_i$ and can communicate with its neighbors. The optimal consensus problem over multi-agent networks with set constraints is described by
\begin{align}\label{consus_pro}
\min _{x \in \Omega} \sum_{i=1}^N f_i(x_i),\ \ \text {s.t. }\left(\mathcal{L} \otimes I_n\right) x=0,
\end{align}
where $\mathcal{L} \in \mathbb{R}^{N \times N}$ is the Laplacian matrix of graph $\mathcal{G}$,
$x_i \in \Omega_i \subseteq \mathbb{R}^n$ with $\Omega=\prod_{i=1}^N \Omega_i$ being the Cartesian product of the local convex compact constraint sets $\Omega_i$ for $i\in [N]$, and $x=\operatorname{col}\left(x_i\right)_{i=1}^N \in \mathbb{R}^{n N}$. Each agent $i$ owns a local variable estimate $x_i$. Since the graph $\mathcal{G}$ is connected, $\left(\mathcal{L} \otimes I_n\right) x=0$ implies that the consensus condition $x_i=x_j$ holds for all $i, j \in [N]$.
\par The augmented Lagrangian function of the consensus problem \eqref{consus_pro} is $\mathscr{L}(x, v):=\sum_{i=1}^N f_i(x_i)+v^T\left(\mathcal{L} \otimes I_n\right) x+\frac{1}{2} x^T\left(\mathcal{L} \otimes I_n\right)x$, where $v:=\operatorname{col}\left(v_i\right)_{i=1}^N \in \mathbb{R}^{nN}$ is the dual variable.
Since $\mathscr{L}$ is an NC-C function,
 problem \eqref{consus_pro} can be transformed into the target constrained minimax problem
$
\min _{x \in \Omega} \max _{v \in \mathcal{V}} \mathscr{L}(x, v),
$
where the convex compact set $\mathcal{V}\subseteq\mathbb{R}^{n N}$ is chosen to be sufficiently large.
\subsection{Power control and transceiver design problem}
\par The wireless transceiver design problem involves $K$ transmitter-receiver pairs that transmit over $N$ channels to maximize their minimum rates. The transmitter $k$ transmits messages with power $x_k\triangleq \left[x_k^1 ; \cdots ; x_k^N\right]$, and its rate is given by:
    $R_k\left(x_1, \ldots, x_K\right)=\sum_{n=1}^N \log \left(1+\frac{a_{k k}^n x_k^n}{\sigma^2+\sum_{l=1, l \neq k}^K a_{l k}^n x_{l}^n}\right)$,
where $a_{l k}^n$ denotes the channel gain between the pair $(l, k)$ on the $n$th channel, and $\sigma^2$ is the noise power \cite{Lu_2020}. 
The function $R_k$ is a non-convex function on $x\triangleq \left[x_1 ; \cdots ; x_K\right]$. Let $\bar{x}$ denote the power budget for each user, then the classical max-min fair power control problem is: $\max _{x \in \mathcal{X}} \min _k R_k(x)$, where $\mathcal{X}\triangleq\{x \mid 0 \leq \sum_{n \in [N]} x_k^n \leq \bar{x}, \ \forall k\in [K]\}$ denotes the feasible power allocations. This max-min fair power control problem can be equivalently formulated as the following nonconvex-concave minimax problem (c.f. \cite[Section I.A]{Lu_2020}),
$$
\min _{x \in \mathcal{X}} \max _{y \in \Delta} \sum_{k=1}^K-R_k\left(x_1, \cdots, x_K\right) \times y_k,
$$
where the set $\Delta \subseteq \mathbb{R}^K$ is the standard simplex.

\section{Problem description and algorithm design}\label{solver_design}

For solving the general smooth NC-C problem \eqref{opti_pro}, a straightforward approach is to use the two-timescale GDA algorithms, 
and the work \cite{zhang2020single} further utilizes the Moreau-Yosida smoothing technique to accelerate them. To be specific, the smoothing technique introduces an auxiliary variable $z$ and defines a regularized function
\begin{align}\label{K_func}
K(x,z;y):=f(x,y)+\frac{r}{2}\|x-z\|^2.
\end{align}
The additional quadratic term smooths the primal update and facilitates a better trade-off between the primal and dual updates when running GDA on this regularized function.

Utilizing the regularized function \eqref{K_func}, we propose a stochastic gradient descent-ascent algorithm with a probabilistic variance reduction technique in the following Algorithm \ref{vr_agda}.
Here, the stochastic gradients of the function $K$ are given by
\begin{equation}\label{nabl_tilde_K_def}
\begin{aligned}
\nabla_x \tilde{K}(x,z;y)&:=G_x (x,y,\xi_1)+r(x-z),\\
\nabla_y \tilde{K}(x,z;y)&:=G_y(x,y,\xi_2),
\end{aligned}
\end{equation}
where $G_x(x,y,\xi_1)$ and $G_y(x,y,\xi_2)$ are stochastic estimators of $\nabla_x f(x,y)$ and $\nabla_y f(x,y)$ using random samples $\xi_1$ and $\xi_2$, respectively.
For simplicity, we denote $\nabla K_t:=\nabla K(x_t,z_t;y_t)$ and $\nabla \tilde{K}_t:=\nabla \tilde{K}(x_t,z_t;y_t)$ for $t\in\mathbb{N}$.

\begin{algorithm}
	\caption{Probabilistic Variance-Reduced Smoothed Gradient Descent-Ascent (PVR-SGDA)}
	\label{vr_agda}
	\begin{algorithmic}[1] 
		\State \textbf{Initialize:} $(x_0, y_0, z_0)$, step sizes $\eta_x > 0$, $\eta_y > 0$, $\rho > 0$, number of epochs $T$ 
		\For {$t = 0, \ldots, T-1$}
		\State $v_t = \begin{cases} 
		\nabla_x K_t & \text{with prob. } p \\
		v_{t-1} + \nabla_x \tilde{K}_t - \nabla_x \tilde{K}_{t-1} & \text{with prob. } 1-p
		\end{cases}$
		\State $w_t = \begin{cases} 
		\nabla_y K_t & \text{with prob. } p \\
		w_{t-1} + \nabla_y \tilde{K}_t - \nabla_y \tilde{K}_{t-1} & \text{with prob. } 1-p
		\end{cases}$
		\State $x_{t+1} = P_{\mathcal{X}}(x_t - \eta_x v_t)$
		\State $y_{t+1} = P_{\mathcal{Y}}(y_t + \eta_y w_t)$
		\State $z_{t+1} = z_t + \rho(x_{t+1} - z_t)$
		\EndFor
	\end{algorithmic}
\end{algorithm}
\par Next, we state some basic assumptions in this paper.
\begin{assumption}\label{f_assum}
	The function $f$ is differentiable and there exists a positive constant $L>0$ such that for all $x_1,x_2\in \mathcal{X}$ and $y_1,y_2\in \mathcal{Y}$, 
	\begin{align*}
	\|\nabla_x f(x_1,y_1)-\nabla_x f(x_2,y_2)\|&\leq L[\|x_1-x_2\|+\|y_1-y_2\|],\\
	\|\nabla_y f(x_1,y_1)-\nabla_y f(x_2,y_2)\|&\leq L[\|x_1-x_2\|+\|y_1-y_2\|].
	\end{align*}
\end{assumption}
\begin{assumption}\label{unbiased_g}
	The vectors $G_x(x,y,\xi_1)$ and $G_y(x,y,\xi_2)$ are unbiased stochastic gradient estimators of $\nabla_x f(x,y)$ and $\nabla_y f(x,y)$, respectively, i.e. $\mathbb{E}[\nabla \tilde{K}(x,z;y)]=\nabla K(x,z;y)$.
\end{assumption}
\begin{remark}
    Assumption \ref{unbiased_g} naturally holds when the samples $\xi_1$ and $\xi_2$ are chosen independently from an identical distribution. Assumption \ref{f_assum} is standard in minimax optimization. These two assumptions are commonly adopted in existing theoretical studies \cite{alter_prox_nc_c,zhang2022sapd,SREDA_nips2020}.
\end{remark}
 With the above assumptions, we assume $r > L$ in the rest of this paper, and then the regularized function $K$ owns the following important property.
 \begin{lemma}\label{K_sc_m}
The function $K(\cdot,z;y)$ is strongly convex with $r-L$ and $\nabla_x K(\cdot,z;y)$ is Lipschitz continuous with constant $L+r$.
 \end{lemma}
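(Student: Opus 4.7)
The plan is to prove the two properties separately, reading directly off the definition $K(x,z;y) = f(x,y) + \frac{r}{2}\|x-z\|^2$ and exploiting Assumption \ref{f_assum}. Both claims concern only the $x$-variable with $z,y$ fixed, so the quadratic term $\frac{r}{2}\|x-z\|^2$ becomes a simple additive strongly convex regularizer with modulus $r$ and with $r$-Lipschitz gradient $r(x-z)$, while $f(\cdot,y)$ contributes the ``nonconvex but $L$-smooth'' piece.

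For the first claim, I would start from the fact that $L$-Lipschitz continuity of $\nabla_x f(\cdot,y)$ implies $L$-weak convexity: by Cauchy--Schwarz,
\begin{equation*}
\langle \nabla_x f(x_1,y) - \nabla_x f(x_2,y),\, x_1 - x_2 \rangle \ge -L\|x_1-x_2\|^2.
\end{equation*}
Writing $\nabla_x K(x,z;y) = \nabla_x f(x,y) + r(x-z)$ and taking the inner product of the difference with $x_1-x_2$, the contribution of the regularizer is exactly $r\|x_1-x_2\|^2$, so the total lower bound becomes $(r-L)\|x_1-x_2\|^2$. Since $r > L$, this is the monotonicity characterization of $(r-L)$-strong convexity of $K(\cdot,z;y)$.

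For the second claim, I would just apply the triangle inequality to $\nabla_x K(x_1,z;y) - \nabla_x K(x_2,z;y) = [\nabla_x f(x_1,y) - \nabla_x f(x_2,y)] + r(x_1 - x_2)$, bound the first bracket by $L\|x_1-x_2\|$ via Assumption \ref{f_assum}, and the second by $r\|x_1-x_2\|$ trivially. Summing yields the $(L+r)$-Lipschitz constant.

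There is really no hard step here; the whole argument is a one-line consequence of Assumption \ref{f_assum} together with the elementary properties of the quadratic regularizer. The only mild point worth flagging is the passage from $L$-smoothness of $f(\cdot,y)$ to $L$-weak convexity, which I would either invoke as a standard fact or justify in one line via Cauchy--Schwarz as above. No appeal to the $y$-variable Lipschitz bound in Assumption \ref{f_assum} is needed, since both statements are univariate in $x$.
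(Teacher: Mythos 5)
Your proof is correct, and since the paper states this lemma without proof (it is a standard consequence of Assumption \ref{f_assum} and the quadratic regularizer), your argument via gradient strong monotonicity and the triangle inequality is exactly the intended one. No gaps.
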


\section{Convergence analysis}\label{proof_sec}
This section concentrates on the convergence analysis of the proposed PVR-SGDA algorithm. For the analysis convenience, we provide some necessary notations as follows:
\begin{itemize}
    \item[1.] $h(x,z)\coloneqq \max_{y\in \mathcal{Y}} K(x,z;y)$;
    \item[2.] $d(y,z)\coloneqq\min_{x\in\mathcal{X}} K(x,z;y)$;
    \item[3.] $P(z)\coloneqq\max_{y\in \mathcal{Y}}\min_{x\in \mathcal{X}} K(x,z;y)$;
    \item[4.] $x(y,z)\coloneqq{\rm argmin}_{x\in\mathcal{X}} K(x,z;y)$;
    \item[5.] $x^*(z)\coloneqq{\rm argmin}_{x\in \mathcal{X}} h(x,z)$;
    \item[6.] $x_+(y,z)\coloneqq P_{\mathcal{X}}(x-\eta_x\nabla_x K(x,z;y))$;
    \item[7.] $ y_+(z)\coloneqq P_{\mathcal{Y}}(y+\eta_y\nabla_y K(x(y,z),z;y)$;
    \item[8.] $Y(z)\coloneqq{\rm argmax}_{y\in \mathcal{Y}} d(y,z)$ and $y(z)\in Y(z)$. 
\end{itemize}
Inspired by the works \cite{li2023nonsmooth,NEURIPS2023_a961dea4},  we propose a novel potential function as follows:
\begin{align}\label{phi_func_def}
\Phi_t\coloneqq V_t+\frac{\gamma}{2p} (\|\nabla_x K_t-v_t\|^2+\|\nabla_y K_t-w_t\|^2),
\end{align}
where $V_t\coloneqq K_t-2d(y_t,z_t)+2P(z_t)$ is bounded below and $\gamma>0$ is a constant parameter to be determined. Notably, due to the positivity of $\gamma$, the lower boundedness of $\Phi_t$ is guaranteed. Here, the first term $V_t$  in \eqref{phi_func_def} can be rewritten as 
\begin{align}\label{V_t_de}
V_t=\underbrace{K_t-d(y_t,z_t)}_{\text{Primal Descent}}+\underbrace{P(z_t)-d(y_t,z_t)}_{\text{Dual Ascent}}+\underbrace{P(z_t)}_{\text{Proximal Descent}}.
\end{align}
The potential function closely links the proximal function $P$ to the updates in the proposed algorithm on $K$, bridged by an ascent step on the dual function $d$. 
The second term in \eqref{phi_func_def} accounts for the error in the gradient estimate.

\subsection{Descent Property of $\Phi_t$}
\par We first analyze the descent property of the term $V_t$, which relies on the primal descent, dual ascent and proximal descent provided in Appendix \ref{V_des_proof_Sec}. 
\begin{lemma}\label{V_ineq_lem}
Suppose Assumptions \ref{f_assum} and \ref{unbiased_g}  hold. The function $V_t$ defined in \eqref{V_t_de} satisfies
   \begin{equation}
    \label{Evsub}
      \begin{aligned}
          V_t-V_{t+1}\geq\ &s_x\|x_{t+1}\!-x_t\|^2 \!+\!s_y\|y_{t+1}\!-y_t\|^2 \!+\!s_z\|z_{t+1}\! -z_t\|^2 \\
          &-s_v \|\nabla_x K_t-v_t\|^2-s_w\|\nabla_y K_t-w_t\|^2 \\
          &-2 L\left(1+24\omega^2r\rho\sigma_2^2\eta_y^2L\right)\|x_t-x_+(y_t,z_t)\|^2  \\
          &-24r\rho \|x^*(z_t)-x(y_+^t(z_t),z_t)\|^2,
    \end{aligned}  
   \end{equation} 
   where $\omega=\frac{\eta_x L+\eta_x r+1}{\eta_x r-\eta_x L}$, $s_x:= \frac{1}{\eta_x}-\frac{r+L+1}{2}-L^2$,  $s_y:=\frac{1}{\eta_y}-\frac{L+1}{2}-L(1+\omega)^2-L_d$, $s_z:= \frac{5r}{6\rho}-\frac{r}{2}-2r\sigma_1-48r\rho \sigma_1^2$, $s_v:=\frac{1}{2}+\frac{2 \eta_x^2 L}{(1+\omega)^2}$,  and $s_w=1+48r\rho \eta_y^2\sigma_2^2$.
\end{lemma}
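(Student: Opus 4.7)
The plan is to split $V_t - V_{t+1}$ along the three-term decomposition in \eqref{V_t_de},
\begin{equation*}
V_t - V_{t+1} = (K_t - K_{t+1}) + 2[d(y_{t+1},z_{t+1}) - d(y_t,z_t)] - 2[P(z_{t+1}) - P(z_t)],
\end{equation*}
and to bound the primal descent, dual ascent, and proximal descent separately before summing. The error terms $\|\nabla_x K_t - v_t\|^2$ and $\|\nabla_y K_t - w_t\|^2$ enter pathwise through the primal piece, so no expectation over the probabilistic update is needed at this stage.

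For the primal piece, I would apply the descent lemma for $K$ along each of the three updates. Lemma \ref{K_sc_m} gives $(L+r)$-smoothness of $K(\cdot,z;y)$; Assumption \ref{f_assum} gives $L$-smoothness of $K(x,z;\cdot)$; and the quadratic coupling gives $r$-smoothness in $z$. Combining these with the projected-gradient steps $x_{t+1} = P_{\mathcal{X}}(x_t - \eta_x v_t)$ and $y_{t+1} = P_{\mathcal{Y}}(y_t + \eta_y w_t)$ and with the linear interpolation for $z$ yields $K_t - K_{t+1}$ bounded below by $\tfrac{1}{\eta_x}\|x_{t+1}-x_t\|^2 + \tfrac{1}{\eta_y}\|y_{t+1}-y_t\|^2$, minus a $\|z_{t+1}-z_t\|^2$ contribution, and minus cross terms $\langle v_t - \nabla_x K_t,\, x_{t+1}-x_t\rangle$ and $\langle w_t - \nabla_y K_t,\, y_{t+1}-y_t\rangle$ that Young's inequality converts into the $s_v, s_w$ error contributions while absorbing the remainder into $s_x, s_y$.

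For the dual ascent, since $K(\cdot,z;y)$ is $(r-L)$-strongly convex by Lemma \ref{K_sc_m}, Danskin's theorem gives $\nabla_y d(y,z) = \nabla_y K(x(y,z), z; y)$, which is Lipschitz in $y$ with some constant $L_d$ and Lipschitz in $z$ with a constant proportional to $r\sigma_1$. I would write the change telescopically as $d(y_{t+1},z_{t+1}) - d(y_t,z_{t+1}) + d(y_t,z_{t+1}) - d(y_t,z_t)$. The first difference uses the ascent step, but the algorithm evaluates the gradient at $x_t$ rather than at $x(y_t,z_t)$; controlling this mismatch via Lipschitz continuity of $\nabla_y K$ in $x$, combined with the contraction bound $\|x_t - x(y_t,z_t)\| \leq \omega\|x_t - x_+(y_t,z_t)\|$ that follows from nonexpansivity of the projected-gradient map on the strongly convex subproblem $K(\cdot, z_t; y_t)$ and explains the constant $\omega = \tfrac{\eta_x L + \eta_x r + 1}{\eta_x r - \eta_x L}$, produces the $2L(1 + 24\omega^2 r\rho\sigma_2^2\eta_y^2 L)\|x_t - x_+(y_t,z_t)\|^2$ term. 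The second difference is handled by $z$-Lipschitzness of $d$ and deposits a $\|z_{t+1}-z_t\|^2$ contribution into $s_z$.

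The proximal piece is the most delicate. Using Moreau--Yosida smoothing, $P$ is differentiable with $\nabla P(z) = r(z - x^*(z))$ and is Lipschitz-smooth in $z$, so applying the descent lemma along $z_{t+1} - z_t = \rho(x_{t+1} - z_t)$ gives $P(z_{t+1}) - P(z_t) \leq \rho r\langle z_t - x^*(z_t),\, x_{t+1} - z_t\rangle + Cr\rho^2\|x_{t+1}-z_t\|^2$ for an explicit constant $C$. I would then insert the intermediate points $x(y_+(z_t),z_t)$ and $x^*(z_t)$ into $x_{t+1} - z_t$ and apply Young's inequality to convert the inner product into a $-\tfrac{5r}{6\rho}\|z_{t+1}-z_t\|^2$ contribution together with the desired $24 r\rho\|x^*(z_t) - x(y_+(z_t),z_t)\|^2$ error term, plus small multiples of $\|x_{t+1}-x_t\|^2$ and $\|y_{t+1}-y_t\|^2$ that are absorbed into $s_x, s_y$. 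Summing the three bounds and matching coefficients then yields the stated inequality. The main obstacle I expect is precisely the bookkeeping in this last step: the Young's constants must be calibrated so that the $\|z_{t+1}-z_t\|^2$ coefficient collapses exactly to $\tfrac{5r}{6\rho} - \tfrac{r}{2} - 2r\sigma_1 - 48r\rho\sigma_1^2$ and every residual lands cleanly in one of the stated quadratic or error terms, with the $\omega$-dependence from the dual step and the $\sigma_1,\sigma_2$ Lipschitz constants from the $d$- and $x^*$-sensitivities all consistent.
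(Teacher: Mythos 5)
Your decomposition of $V_t-V_{t+1}$ into primal, dual, and proximal pieces is exactly the paper's, and you correctly identify the key ingredients: the $\omega$-contraction coming from strong monotonicity of $\nabla_x K(\cdot,z;y)$, the Lipschitz constants $\sigma_1,\sigma_2$ of $x(y,\cdot)$ and $x(\cdot,z)$, and the intermediate-point decomposition that produces the residual $\|x^*(z_t)-x(y_+^t(z_t),z_t)\|^2$. Your treatment of the proximal piece via $\nabla P(z)=r(z-x^*(z))$ and a descent lemma is also legitimate in itself ($P$ is differentiable with $r(1+\sigma_1)$-Lipschitz gradient by the sensitivity bound on $x^*$), and is essentially equivalent to the concavity-type inequality the paper imports from the literature. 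The problem is that the sign accounting in your primal bound fails in two places, and these are not cosmetic.

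First, the $y$-step is an \emph{ascent} step on $K$. The primal piece gives $K(x_{t+1},z_t;y_t)-K(x_{t+1},z_t;y_{t+1})\geq \langle \nabla_y K(x_{t+1},z_t;y_t),y_t-y_{t+1}\rangle-\frac{L}{2}\|y_{t+1}-y_t\|^2$, and since the projection update yields $\langle w_t,y_{t+1}-y_t\rangle\geq \frac{1}{\eta_y}\|y_{t+1}-y_t\|^2$, this linear term is approximately $-\frac{1}{\eta_y}\|y_{t+1}-y_t\|^2$, not $+\frac{1}{\eta_y}\|y_{t+1}-y_t\|^2$ as you claim. The positive $\frac{1}{\eta_y}$ in $s_y$ only emerges after you add \emph{twice} the dual-ascent linear term, forming $\langle 2\nabla_y K(x(y_t,z_t),z_t;y_t)-\nabla_y K(x_{t+1},z_t;y_t),\,y_{t+1}-y_t\rangle$; this pairing is precisely why $V_t$ carries the coefficient $2$ on $d$ and $P$. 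If you convert the primal $y$ cross term by Young's inequality in isolation, the dual-ascent linear term is left unpaired and the net $y$-coefficient never becomes positive. Second, the dominant $+\frac{5r}{6\rho}$ in $s_z$ does not come from the proximal piece: the $z$-update gives the exact quadratic identity $K(x_{t+1},z_t;y_{t+1})-K_{t+1}=\frac{r(2-\rho)}{2\rho}\|z_{t+1}-z_t\|^2$, a \emph{positive} $O(r/\rho)$ contribution from the primal piece (you assert the primal piece contributes a negative $z$-term). The $\frac{r}{2}\langle z_{t+1}-z_t,z_{t+1}+z_t\rangle$ parts of the dual and proximal inner products cancel, leaving only $2r\langle z_{t+1}-z_t,\,x(y(z_{t+1}),z_t)-x(y_{t+1},z_{t+1})\rangle$, and Young's inequality applied to that inner product can only \emph{cost} a $\frac{r}{6\rho}\|z_{t+1}-z_t\|^2$; it cannot manufacture a favorable $+\frac{5r}{6\rho}$ coefficient. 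Until these two sign issues are repaired, the three bounds do not sum to \eqref{Evsub}.
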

Then, we provide the decent properties of the last two terms in \eqref{phi_func_def}, and the proof is provided in Appendix \ref{gradient_des_appendix}.
\begin{lemma}\label{lemma:key}
    Suppose Assumptions \ref{f_assum} and \ref{unbiased_g}  hold. The stochastic gradient estimators $v_t$ and $w_t$ generated by Algorithm \ref{vr_agda} satisfy
    \begin{align}
    &\mathbb{E}[\|\nabla_x K_t-v_t\|^2]-\mathbb{E}[\|\nabla_x K_{t+1}-v_{t+1}\|^2]\notag\\
    \geq\ & p\mathbb{E}[\|\nabla_x K_t-v_t\|^2]-3 \left(1-p\right) (L+r)^2\mathbb{E}[\|x_{t+1}-x_t\|^2]\notag\\
    & -3 \left(1-p\right)L^2 \mathbb{E}[\|y_{t+1}-y_t\|^2]\notag\\
    &-3 \left(1-p\right)r^2 \mathbb{E}[\|z_{t+1}-z_t\|^2],  \label{nablax_v}
    \end{align}
    and
    \begin{align}
    &\mathbb{E}[\|\nabla_y K_t-w_t\|^2]-\mathbb{E}[\|\nabla_y K_{t+1}-w_{t+1}\|^2]\notag\\
    \geq\ &p \mathbb{E}[\|\nabla_y K_t-w_t\|^2] -2\left(1-p\right) L^2\mathbb{E}[\|x_{t+1}-x_t\|^2]\notag\\
    & -2\left(1-p\right) L^2\mathbb{E}[\|y_{t+1}-y_t\|^2]. \label{nablay_w}
\end{align}
\end{lemma}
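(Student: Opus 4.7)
The plan is to analyze each of the two estimators $v_{t+1}$ and $w_{t+1}$ by conditioning on the natural filtration $\mathcal{F}_t$, which contains everything up through the computation of $(x_{t+1},y_{t+1},z_{t+1})$, so the only residual randomness is the Bernoulli coin and the mini-batch sample $\xi_{t+1}$ used at step $t+1$. For $v_{t+1}$ I would split on the probabilistic coin. With probability $p$ we have $v_{t+1}=\nabla_x K_{t+1}$, so this branch contributes zero error. With probability $1-p$, we use the SARAH-style recursion, and (following the standard SARAH convention implicit in the algorithm) the same sample $\xi_{t+1}$ is used to evaluate both $\nabla_x \tilde K_{t+1}$ and $\nabla_x \tilde K_t$.

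On the branch with probability $1-p$, I would rewrite
\begin{align*}
\nabla_x K_{t+1}-v_{t+1}=(\nabla_x K_t-v_t)+\bigl[(\nabla_x K_{t+1}-\nabla_x K_t)-(\nabla_x\tilde K_{t+1}-\nabla_x\tilde K_t)\bigr].
\end{align*}
By Assumption \ref{unbiased_g}, the bracketed term is mean-zero conditional on $\mathcal{F}_t$. The cross term therefore vanishes on taking $\mathbb{E}_{\xi_{t+1}}$, giving an exact variance-plus-bias-squared decomposition. The variance of the bracketed term is bounded by $\mathbb{E}[\|\nabla_x\tilde K_{t+1}-\nabla_x\tilde K_t\|^2\mid\mathcal{F}_t]$ via $\mathbb{E}\|X-\mathbb{E}X\|^2\le\mathbb{E}\|X\|^2$. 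Then I would expand $\nabla_x\tilde K$ using \eqref{nabl_tilde_K_def} into the sample gradient piece plus the proximal piece $r(x-z)$, apply the triangle inequality together with the individual Lipschitz property of $G_x$ coming from Assumption \ref{f_assum}, and use $(a+b+c)^2\le 3(a^2+b^2+c^2)$ to obtain
\begin{align*}
\|\nabla_x\tilde K_{t+1}-\nabla_x\tilde K_t\|^2\le 3(L+r)^2\|x_{t+1}-x_t\|^2+3L^2\|y_{t+1}-y_t\|^2+3r^2\|z_{t+1}-z_t\|^2.
\end{align*}
Combining both branches with weights $p$ and $1-p$ and subtracting from $\|\nabla_x K_t-v_t\|^2$ yields \eqref{nablax_v} after taking a full expectation.

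For $w_{t+1}$ the argument is structurally identical but simpler: because $\nabla_y\tilde K$ has no $r(x-z)$ penalty term, expanding the stochastic difference gives only two contributions, and a two-term bound $(a+b)^2\le 2(a^2+b^2)$ combined with the Lipschitz bound on $G_y$ from Assumption \ref{f_assum} produces the tighter coefficient $2L^2$ on the $\|x_{t+1}-x_t\|^2$ and $\|y_{t+1}-y_t\|^2$ terms, with no $z$-term appearing; this gives \eqref{nablay_w}.

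The main conceptual obstacle is the filtration bookkeeping that makes the cross term genuinely zero: one must ensure that $\nabla_x\tilde K_{t+1}$ and $\nabla_x\tilde K_t$ are evaluated at a common fresh sample drawn independently of $\mathcal{F}_t$, so that the increment has conditional mean $\nabla_x K_{t+1}-\nabla_x K_t$. Once this is set up correctly, the remaining steps are routine triangle-inequality and Lipschitz estimates, and the coefficients $3(L+r)^2,3L^2,3r^2$ (resp.\ $2L^2$) fall out immediately from the choice of the Cauchy–Schwarz-style constant in the final expansion.
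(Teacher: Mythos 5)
Your proposal is correct and follows essentially the same route as the paper's proof: condition on the filtration, split on the Bernoulli coin, use unbiasedness (Assumption \ref{unbiased_g}) to kill the cross term and obtain the bias--variance decomposition, bound the variance via $\mathbb{E}\|X-\mathbb{E}X\|^2\le\mathbb{E}\|X\|^2$, and finish with the Lipschitz bounds on $\nabla\tilde K$ together with the $3$-term (resp.\ $2$-term) Cauchy--Schwarz expansion. Your explicit remark that $\nabla_x\tilde K_{t+1}$ and $\nabla_x\tilde K_t$ must share a fresh sample independent of $\mathcal{F}_t$ is the same implicit convention the paper relies on, just stated more carefully.
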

\par With the descent properties in Lemmas \ref{V_ineq_lem} and \ref{lemma:key}, the following proposition quantifies the change of  $\Phi_t$ after one round of updates, whose proof is provided in Appendix \ref{des_phi_pvr}.

\begin{proposition}\label{des_phi}
Suppose Assumptions \ref{f_assum} and \ref{unbiased_g}  hold. Without loss of generality, we set $L\geq 1$. Let $2L\leq r\leq 4L$, $\gamma = 4+\frac{2}{L}$, and $p\in (0,1]$. The step-sizes satisfy 
\begin{align*}
    \eta_x&\leq \frac{p}{p\left(1+24L+2L^2\right)+80L^2\gamma},\\
   \eta_y &\leq \min\left\{\frac{p}{2p\left(1+9L\right)+10\gamma L^2},\frac{1}{2L(1+\omega)^2}\right\},\\
    \rho &\leq \frac{4p}{1200p+9r \gamma},
\end{align*}
then for any $t\geq 0$,
    \begin{align*}
     &\mathbb{E}[\Phi_t-\Phi_{t+1}]\\
      \geq \ &\frac{1}{2\eta_x}\mathbb{E} [\|x_{t+1}-x_t\|^2]+\frac{1}{4\eta_y} \mathbb{E}[\|y_t-y_+^t(z_t)\|^2]\\
     &+\frac{r}{6\rho}\mathbb{E} [\|z_t-z_{t+1}\|^2] +\frac{\gamma}{4}\mathbb{E}[\|\nabla_x K_t-v_t\|^2]\\
     &+\!\frac{\gamma}{4}\mathbb{E}[\|\nabla_y K_t-w_t\|^2]\!-\!24r\rho  \mathbb{E}[\|x^*(z_t)\!-\!x(y_+^t(z_t),z_t)\|^2].
\end{align*}
\end{proposition}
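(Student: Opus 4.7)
The plan is to add inequality \eqref{Evsub} from Lemma \ref{V_ineq_lem} to $\frac{\gamma}{2p}$ times each of \eqref{nablax_v} and \eqref{nablay_w} from Lemma \ref{lemma:key}, producing a preliminary one-step descent for the potential $\Phi_t$. The resulting right-hand side carries positive coefficients on $\|x_{t+1}-x_t\|^2$, $\|y_{t+1}-y_t\|^2$, $\|z_{t+1}-z_t\|^2$, $\|\nabla_x K_t-v_t\|^2$, and $\|\nabla_y K_t-w_t\|^2$, together with two residual negative terms, namely $-2L(1+24\omega^2 r\rho\sigma_2^2\eta_y^2 L)\|x_t-x_+(y_t,z_t)\|^2$ and $-24r\rho\|x^*(z_t)-x(y_+^t(z_t),z_t)\|^2$. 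The second term is kept verbatim because it appears unchanged in the target inequality, while the first must be converted into terms already present in the recursion. The choice $\gamma=4+2/L$ is made precisely so that $\gamma/2$ exceeds $s_v=\tfrac12+O(\eta_x^2)$ and $s_w=1+O(\eta_y^2\rho)$ with enough slack to deposit $\gamma/4$ on the right-hand side after the upcoming absorptions.

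\textbf{Two decompositions.} To dispose of the $x_+$-residual, I will use the non-expansiveness of $P_{\mathcal{X}}$ applied to $x_{t+1}=P_{\mathcal{X}}(x_t-\eta_x v_t)$ and $x_+(y_t,z_t)=P_{\mathcal{X}}(x_t-\eta_x\nabla_x K_t)$, yielding
\begin{align*}
\|x_t-x_+(y_t,z_t)\|^2 \leq 2\|x_t-x_{t+1}\|^2 + 2\eta_x^2\|v_t-\nabla_x K_t\|^2,
\end{align*}
which redistributes its contribution into the coefficients of $\|x_{t+1}-x_t\|^2$ and $\|\nabla_x K_t-v_t\|^2$. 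To surface $\|y_t-y_+^t(z_t)\|^2$ on the right-hand side, I will expand
\begin{align*}
\|y_t-y_+^t(z_t)\|^2 \leq 3\|y_t-y_{t+1}\|^2 + 3\eta_y^2\|w_t-\nabla_y K_t\|^2 + 3\eta_y^2 L^2\|x_t-x(y_t,z_t)\|^2
\end{align*}
via a triangle inequality and $L$-Lipschitzness of $\nabla_y K$, and then use the standard one-step projected-gradient contraction $\|x_t-x(y_t,z_t)\|\leq\omega\|x_t-x_+(y_t,z_t)\|$ on the $(r-L)$-strongly-convex function $K(\cdot,z_t;y_t)$ from Lemma \ref{K_sc_m}, feeding the resulting $\|x_t-x_+(y_t,z_t)\|^2$ back through the first decomposition. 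Rearranging trades a fraction of the $\|y_{t+1}-y_t\|^2$ coefficient for $\frac{1}{4\eta_y}\|y_t-y_+^t(z_t)\|^2$, while further loading the $\|x_{t+1}-x_t\|^2$ and gradient-error coefficients with higher-order terms.

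\textbf{Coefficient verification and main obstacle.} After both substitutions, it remains to check that the combined coefficients of $\|x_{t+1}-x_t\|^2$, $\|z_{t+1}-z_t\|^2$, $\|\nabla_x K_t-v_t\|^2$, and $\|\nabla_y K_t-w_t\|^2$ are at least $\frac{1}{2\eta_x}$, $\frac{r}{6\rho}$, $\frac{\gamma}{4}$, and $\frac{\gamma}{4}$ respectively, and that enough of the $\|y_{t+1}-y_t\|^2$ coefficient remains after Step 2 to absorb the generated $\frac{3}{4\eta_y}\|y_{t+1}-y_t\|^2$. Substituting $s_x,s_y,s_z,s_v,s_w$ from Lemma \ref{V_ineq_lem}, using $r\leq 4L$ to bound $(L+r)^2\leq 25L^2$, and using $\eta_y\leq 1/(2L(1+\omega)^2)$ to neutralise the $\omega^2\eta_y^2 L^2$ cross terms (so that $\omega$ is allowed to be arbitrarily large without harm), each condition reduces to a numerical inequality on a single step size that matches the advertised bounds for $\eta_x,\eta_y,\rho$. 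The hard part will be the bookkeeping in this final step: the $x_+$-residual is consumed \emph{twice}, once directly and once re-introduced through the $y_+^t(z_t)$ conversion, so the weights at each absorption must be tuned precisely so that all five positive-term coefficients simultaneously retain their prescribed slack. This coupling is what ultimately dictates the specific numerical constants ($24L$, $2L^2$, $80L^2\gamma$, $1200$, $9r\gamma$) appearing in the step-size conditions.
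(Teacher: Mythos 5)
Your proposal follows essentially the same route as the paper's proof: combine Lemma \ref{V_ineq_lem} with $\frac{\gamma}{2p}$ times the two estimates of Lemma \ref{lemma:key}, absorb the residual $\|x_t-x_+(y_t,z_t)\|^2$ via the non-expansiveness bound $\|x_t-x_+(y_t,z_t)\|^2\leq 2\|x_{t+1}-x_t\|^2+2\eta_x^2\|\nabla_x K_t-v_t\|^2$, convert $\|y_{t+1}-y_t\|^2$ into $\|y_t-y_+^t(z_t)\|^2$ through the same triangle-inequality decomposition (the paper uses constants $1/2$ and $2$ where you use $3$, an immaterial difference), and close with the coefficient bookkeeping under the stated step-size restrictions. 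You correctly identify the one subtle point — that the $x_+$-residual is fed in twice, once directly and once through the $y$-conversion — which is exactly what the paper's quantity $s_x^+$ tracks, so the argument is sound and matches the paper's.
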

We proceed to bound the negative term $\|x^*(z_t)-x(y_+^t(z_t),z_t)\|^2$ in the above proposition using some positive terms, so as to establish the sufficient decrease property. The following lemma explicitly quantifies the primal-dual relationship by a dual error bound.
\begin{lemma}[c.f. {\cite[Lemma D.1]{li2023nonsmooth}}]\label{xstar_yplus} Suppose Assumption \ref{f_assum} holds. For any $y\in \mathcal{Y}$ and $z\in \mathbb{R}^n$, we know
    \begin{align}\label{x_star_sub_y}
        \|x^*(z_t)-x(y_+^t(z_t),z_t)\|^2\leq \kappa \|y_t-y_+^t(z_t)\|,
    \end{align}
    where $\kappa=\frac{1+\eta_yL\sigma_2+\eta_yL}{\eta_y\left(r-L\right)}\cdot D(\mathcal{Y})$,
    and $D(\mathcal{Y})$ is the diameter of $\mathcal{Y}$.
\end{lemma}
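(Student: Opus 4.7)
The plan is to exploit the saddle-point structure of $K$ and the strong convexity provided by Lemma \ref{K_sc_m}, then reduce one power of the norm via the compactness of $\mathcal{Y}$, and finally invoke a projection-based argument to relate the distance from $y_+^t(z_t)$ to the dual optimizer $y(z_t)$ with the gradient-mapping residual $\|y_t-y_+^t(z_t)\|$.

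First I would observe that, since $K(\cdot,z;y)$ is strongly convex by Lemma \ref{K_sc_m}, the inner $\min_x$ has a unique solution and the classical saddle-point identity gives $x^*(z_t)=x(y(z_t),z_t)$ for any $y(z_t)\in Y(z_t)$. This reduces the task to bounding $\|x(y(z_t),z_t)-x(y_+^t(z_t),z_t)\|^2$. A standard two-point variational inequality argument applied to the definition of $x(y,z)$, combined with the strong convexity modulus $r-L$ and the Lipschitz constant $L$ of $\nabla_x f$ from Assumption \ref{f_assum}, yields the Lipschitz bound
$$\|x(y_1,z)-x(y_2,z)\|\leq \tfrac{L}{r-L}\,\|y_1-y_2\|,$$
so in particular
$$\|x^*(z_t)-x(y_+^t(z_t),z_t)\|^2\leq \tfrac{L^2}{(r-L)^2}\,\|y(z_t)-y_+^t(z_t)\|^2.$$

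To match the unsquared right-hand side in the claim I would then factor one copy of $\|y(z_t)-y_+^t(z_t)\|$ out via the diameter inequality $\|y(z_t)-y_+^t(z_t)\|\leq D(\mathcal{Y})$. What remains is the dual error bound
$$\|y(z_t)-y_+^t(z_t)\|\leq \tfrac{1+\eta_yL\sigma_2+\eta_yL}{\eta_y}\,\|y_t-y_+^t(z_t)\|,$$
which, multiplied together with the two previous inequalities, delivers the stated $\kappa$.

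The main obstacle is precisely this last step, because $d(\cdot,z_t)$ is only concave (not strongly concave), so a generic primal-dual error bound is unavailable and one must exploit the proximal-mapping structure of $y_+^t(z_t)$. My plan is: (i) write the projection defining $y_+^t(z_t)$ as a variational inequality and test it against $y(z_t)$; (ii) write the first-order optimality of $y(z_t)$ for $\max_{y\in\mathcal{Y}} d(y,z_t)$, recalling via Danskin's theorem that $\nabla_y d(y,z)=\nabla_y f(x(y,z),y)$; (iii) subtract the two variational inequalities and use Cauchy--Schwarz to produce a term of the form $\frac{1}{\eta_y}\|y_+^t(z_t)-y_t\|\,\|y(z_t)-y_+^t(z_t)\|$ on one side. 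The remaining cross terms will involve $\nabla_y f$ evaluated at $x(y_t,z_t)$ versus $x(y(z_t),z_t)$, and invoking Assumption \ref{f_assum} together with the Lipschitz bound on $x(\cdot,z_t)$ contributes the additional $\eta_y L$ and $\eta_y L\sigma_2$ factors inside $\kappa$ (the $\sigma_2$ arising from the same stochastic-gradient constant already used in Lemma \ref{V_ineq_lem}). Dividing through by the common factor of $\|y(z_t)-y_+^t(z_t)\|$ yields the required dual error bound, and the proof concludes by chaining the three inequalities. Since this argument closely parallels \cite[Lemma D.1]{li2023nonsmooth}, the only delicate part is the bookkeeping of constants so that the stated form of $\kappa$ is recovered exactly.
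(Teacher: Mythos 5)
The paper gives no proof of this lemma (it is imported from \cite[Lemma D.1]{li2023nonsmooth}), but your proposed route has a genuine gap at exactly the step you flag as "the main obstacle." Your chain hinges on the dual error bound $\|y(z_t)-y_+^t(z_t)\|\leq \frac{1+\eta_yL\sigma_2+\eta_yL}{\eta_y}\|y_t-y_+^t(z_t)\|$, i.e., that the distance from $y_+^t(z_t)$ to the dual solution set is controlled by the gradient-mapping residual. For a merely concave $d(\cdot,z_t)$ this is false: take $d(y)=-y^4$ on $[-1,1]$, where the residual scales like $|y_+|^3$ while the distance to the maximizer is $|y_+|$. Your sketched fix---subtract the two variational inequalities and apply Cauchy--Schwarz---cannot rescue it, because monotonicity of $-\nabla_y d$ only yields $\langle \nabla_y d(y_1,z)-\nabla_y d(y_2,z),\,y_1-y_2\rangle\leq 0$; there is no strong-concavity term $\mu\|y(z_t)-y_+^t(z_t)\|^2$ on the left to divide through by, so the argument produces nothing coercive in $\|y(z_t)-y_+^t(z_t)\|$. (Separately, even granting that bound, your chaining yields $\kappa'=\frac{L^2(1+\eta_yL\sigma_2+\eta_yL)}{\eta_y(r-L)^2}D(\mathcal{Y})$, which carries an extra factor of $L^2/(r-L)$ relative to the stated $\kappa$.)

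The correct argument avoids locating $y(z_t)$ altogether and works with function values. Since $h(\cdot,z)=\max_{y}K(\cdot,z;y)$ inherits $(r-L)$-strong convexity from Lemma \ref{K_sc_m}, one has $\frac{r-L}{2}\|x^*(z_t)-x(y_+^t(z_t),z_t)\|^2\leq h(x(y_+^t(z_t)),z_t)-P(z_t)\leq h(x(y_+^t(z_t)),z_t)-d(y_+^t(z_t),z_t)$, using $P(z_t)\geq d(y_+^t(z_t),z_t)$. Concavity of $K(x,z;\cdot)$ bounds this gap by $\max_{y\in\mathcal{Y}}\langle\nabla_y K(x(y_+^t(z_t)),z_t;y_+^t(z_t)),\,y-y_+^t(z_t)\rangle$; the projection inequality defining $y_+^t(z_t)$ handles the inner product with the gradient evaluated at $(x(y_t,z_t),y_t)$ up to $\frac{1}{\eta_y}D(\mathcal{Y})\|y_t-y_+^t(z_t)\|$, and the mismatch between the two gradient evaluation points is absorbed via Assumption \ref{f_assum} and \eqref{y_sub_sigma}, contributing the $\eta_yL\sigma_2+\eta_yL$ terms. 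This is where $D(\mathcal{Y})$ enters and why only a single power of $\|y_t-y_+^t(z_t)\|$ survives on the right-hand side---a structural feature your squared-Lipschitz decomposition cannot reproduce.
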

\subsection{Convergence Theorem}
\par 
To introduce the main result, we first
define the stationarity measure that we are interested in.

\begin{definition}\label{opt_def}
	Let $\epsilon\geq 0$ be given. The point $(x,y)\in \mathcal{X}\times \mathcal{Y}$ is said to be an $\epsilon$-game-stationary point ($\epsilon$-GS) if 
	\begin{align*}
	{\rm dist}(0,\nabla_x f(x,y)+\partial \mathbf{1}_{\mathcal{X}}(x))\leq \epsilon,\\
	{\rm dist}(0, -\nabla_y f(x,y)+\partial \mathbf{1}_{\mathcal{Y}}(y))\leq \epsilon.
	\end{align*}
\end{definition}
The notion of game stationarity is a natural extension of that of first-order stationarity in a minimization problem. In the following, we will provide a sufficient condition for achieving such game stationary points.

\begin{lemma}
    Let $\epsilon\geq 0$ be given. Suppose that
    \begin{align*}
	\max\bigg\{&\frac{\|x_t-x_{t+1}\|}{\eta_x},\frac{\|y_t-y_+^t(z_t)\|}{\eta_y},\frac{\|z_{t+1}-z_t\|}{\rho},\\
	&\|\nabla_x K_t-v_t\|,\|\nabla_y K_t-w_t\|\bigg\}\leq \epsilon.
	\end{align*}
    Then, there exists a $\beta>0$ such that $(x_{t+1},y_{t+1})$ is a $\beta \epsilon$-GS.
\end{lemma}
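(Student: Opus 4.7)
The plan is to exploit the projection optimality conditions in the $x$- and $y$-updates of Algorithm \ref{vr_agda} to produce explicit subgradients of the indicator functions at $(x_{t+1}, y_{t+1})$, and then show that adding $\nabla_x f$ (resp.\ subtracting $\nabla_y f$) yields an $O(\epsilon)$ residual after decomposing into pieces each controlled by the hypotheses and Assumption \ref{f_assum}. From $x_{t+1} = P_{\mathcal{X}}(x_t - \eta_x v_t)$ I would read off $u_x := (x_t - x_{t+1})/\eta_x - v_t \in \partial \mathbf{1}_{\mathcal{X}}(x_{t+1})$, and then substitute $v_t = \nabla_x f(x_t,y_t) + r(x_t - z_t) - (\nabla_x K_t - v_t)$ to express $\nabla_x f(x_{t+1},y_{t+1}) + u_x$ as the sum of (i) a smoothness gap $\nabla_x f(x_{t+1},y_{t+1}) - \nabla_x f(x_t,y_t)$, (ii) the primal residual $(x_t - x_{t+1})/\eta_x$, (iii) the variance-reduction error $\nabla_x K_t - v_t$, and (iv) a proximal slack $-r(x_t - z_t)$. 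Pieces (ii) and (iii) are at most $\epsilon$ by hypothesis; piece (iv) is handled via the $z$-update identity $x_{t+1} - z_t = (z_{t+1} - z_t)/\rho$ and the triangle inequality to get $\|x_t - z_t\| \leq (\eta_x + 1)\epsilon$; piece (i) is at most $L(\|x_{t+1}-x_t\| + \|y_{t+1}-y_t\|)$ by Assumption \ref{f_assum}.

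For the $y$-side, I would apply the analogous manipulation starting from $u_y := (y_t - y_{t+1})/\eta_y + w_t \in \partial \mathbf{1}_{\mathcal{Y}}(y_{t+1})$, with the simplification that $\nabla_y K \equiv \nabla_y f$, since the regularizer $\tfrac{r}{2}\|x-z\|^2$ in $K$ has no $y$-dependence. Thus only the smoothness gap, the primal residual, and the gradient error need to be bounded on the $y$-side, with no proximal slack to handle.

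The main obstacle, and the only step beyond routine algebra, is that the hypothesis bounds $\|y_t - y_+^t(z_t)\|/\eta_y$ rather than $\|y_t - y_{t+1}\|/\eta_y$, and $y_+^t(z_t)$ is evaluated at the \emph{optimal} $x(y_t,z_t)$ rather than at $x_t$. To close this gap I would use projection nonexpansiveness to get $\|y_{t+1} - y_+^t(z_t)\| \leq \eta_y\bigl(\|w_t - \nabla_y K_t\| + L\|x_t - x(y_t,z_t)\|\bigr)$, and then bound $\|x_t - x(y_t,z_t)\|$ by first noting $\|x_{t+1} - x_+(y_t,z_t)\| \leq \eta_x\|v_t - \nabla_x K_t\| \leq \eta_x\epsilon$, hence $\|x_t - x_+(y_t,z_t)\| \leq 2\eta_x\epsilon$, and then invoking the strong convexity of $K(\cdot,z_t;y_t)$ with modulus $r-L$ (Lemma \ref{K_sc_m}) through a standard projected-gradient error bound of the form $\|x_t - x(y_t,z_t)\| \leq \frac{C}{\eta_x(r-L)}\|x_t - x_+(y_t,z_t)\|$. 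Substituting back yields $\|y_{t+1} - y_t\| = O(\eta_y\epsilon)$, which feeds into both the $x$- and $y$-stationarity decompositions.

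Assembling the four (resp.\ three) bounds on each side produces a constant $\beta = \beta(L,r,\eta_x,\eta_y,\rho) > 0$ such that both $\mathrm{dist}\bigl(0, \nabla_x f(x_{t+1},y_{t+1}) + \partial \mathbf{1}_{\mathcal{X}}(x_{t+1})\bigr)$ and $\mathrm{dist}\bigl(0, -\nabla_y f(x_{t+1},y_{t+1}) + \partial \mathbf{1}_{\mathcal{Y}}(y_{t+1})\bigr)$ are at most $\beta\epsilon$, giving the claimed $\beta\epsilon$-GS property.
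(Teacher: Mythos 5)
Your argument is correct, and it supplies a proof that the paper itself omits (the lemma is stated without proof, with the convergence theorem deferring to the framework of the cited reference). Every ingredient you use is already established in the paper's Appendix A — the normal-cone characterization of the projections, the bound $\|x_{t+1}-x_+(y_t,z_t)\|\leq \eta_x\|\nabla_x K_t - v_t\|$, the error bound $\|x_t - x(y_t,z_t)\|\leq \omega\|x_t - x_+(y_t,z_t)\|$ from strong convexity of $K(\cdot,z_t;y_t)$, and the nonexpansiveness estimate for $\|y_{t+1}-y_+^t(z_t)\|$ — and you correctly identify and close the only nontrivial gap, namely converting the hypothesis on $\|y_t - y_+^t(z_t)\|/\eta_y$ into a bound on $\|y_t-y_{t+1}\|/\eta_y$.
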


Armed with this lemma and Proposition \ref{des_phi}, we are ready to establish the convergence results for Algorithm \ref{vr_agda}.
\begin{theorem}\label{con_theo}
    Under the setting of Proposition \ref{des_phi} and $\rho=\mathcal{O}(T^{-\frac{1}{2}})$, for any $T>0$, there exists a $t\in \{1,\ldots,T\}$ such that $(x^{t+1},y^{t+1})$ is an $\mathcal{O}(T^{-\frac{1}{4}})$-GS in expectation, i.e., 
    \begin{align}\label{theo_result}
        \mathbb{E}[{\rm dist}(0,\nabla_x f(x_{t+1},y_{t+1})+\partial \mathbf{1}_{\mathcal{X}}(x_{t+1}))]\leq \epsilon,\notag\\
        \mathbb{E}[{\rm dist}(0, -\nabla_y f(x_{t+1},y_{t+1})+\partial \mathbf{1}_{\mathcal{Y}}(y_{t+1}))]\leq \epsilon,
    \end{align}
    where $\epsilon=\mathcal{O}(T^{-\frac{1}{4}})$.
\end{theorem}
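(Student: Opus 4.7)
The plan is to turn Proposition~\ref{des_phi} into a clean one-step descent by absorbing its lone negative term via Lemma~\ref{xstar_yplus}, then to pick the $\rho$-schedule so that telescoping produces an iterate at which all five quantities listed in the sufficient-condition lemma above are simultaneously small, and finally to translate those smallness bounds into the game-stationarity measure of Definition~\ref{opt_def}.

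First I would fold the residual $-24r\rho\,\mathbb{E}\|x^{*}(z_t)-x(y_+^t(z_t),z_t)\|^{2}$ in Proposition~\ref{des_phi} into the positive side. Lemma~\ref{xstar_yplus} upper-bounds it by $24r\rho\kappa\,\mathbb{E}\|y_t-y_+^t(z_t)\|$, so Young's inequality $ab\leq \tfrac{\delta}{2}a^2+\tfrac{1}{2\delta}b^2$ with $\delta=\tfrac{1}{4\eta_y}$ consumes half of the $\tfrac{1}{4\eta_y}\mathbb{E}\|y_t-y_+^t(z_t)\|^2$ contribution, leaving a deterministic residual of order $\eta_y(r\rho\kappa)^2=\mathcal{O}(\rho^2/\eta_y)$, where I used $\kappa=\mathcal{O}(1/\eta_y)$ from the formula in Lemma~\ref{xstar_yplus}. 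Telescoping the modified inequality from $t=0$ to $T-1$ and invoking the lower boundedness of $\Phi_t$, which follows because the three pieces in the decomposition \eqref{V_t_de} are either manifestly nonnegative (primal and dual gaps) or bounded below by compactness of $\mathcal{X},\mathcal{Y}$ and continuity of $f$, gives
\begin{equation*}
\sum_{t=0}^{T-1}\mathbb{E}[\mathcal{Q}_t]\;\leq\;\Phi_0-\Phi_{\min}+C\,T\rho^2/\eta_y,
\end{equation*}
where $\mathcal{Q}_t$ bundles the five nonnegative terms $\tfrac{1}{2\eta_x}\|x_{t+1}-x_t\|^2,\tfrac{1}{8\eta_y}\|y_t-y_+^t(z_t)\|^2,\tfrac{r}{6\rho}\|z_{t+1}-z_t\|^2,\tfrac{\gamma}{4}\|\nabla_x K_t-v_t\|^2,\tfrac{\gamma}{4}\|\nabla_y K_t-w_t\|^2$. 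The choice $\rho=\Theta(T^{-1/2})$ makes the slack $T\rho^2/\eta_y$ equal to $\mathcal{O}(1)$, and the average over $t$ of each summand is therefore $\mathcal{O}(1/T)$.

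Second, I would extract an index $t^\star\in\{0,\ldots,T-1\}$ at which this average bound is attained termwise. Square-rooting each squared bound and applying Jensen's inequality to pass from $\mathbb{E}\|X\|^2$ to $\mathbb{E}\|X\|$, then dividing by the appropriate step size, yields $\mathbb{E}\|x_{t+1}-x_t\|/\eta_x$, $\mathbb{E}\|y_t-y_+^t(z_t)\|/\eta_y$, $\mathbb{E}\|\nabla_x K_t-v_t\|$ and $\mathbb{E}\|\nabla_y K_t-w_t\|$ all of order $T^{-1/2}$, while the $z$-term becomes $\mathbb{E}\|z_{t+1}-z_t\|/\rho=\mathcal{O}(1/\sqrt{\rho T})=\mathcal{O}(T^{-1/4})$ because the factor $\rho^{-1}$ is present in the denominator. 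The binding rate is thus $\epsilon=\mathcal{O}(T^{-1/4})$, and feeding these five expectation bounds into the sufficient-condition lemma stated immediately before the theorem gives \eqref{theo_result}.

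The main obstacle is the mismatch in Lemma~\ref{xstar_yplus}: its right-hand side is \emph{linear} in $\|y_t-y_+^t(z_t)\|$ rather than quadratic, so Young's inequality unavoidably introduces a per-step slack of size $\rho^2/\eta_y$ that is independent of iterate progress. This forces $\rho$ to decay, and through the $z$-bound $\|z_{t+1}-z_t\|/\rho=\mathcal{O}((\rho T)^{-1/2})$ the decay caps the attainable GS rate at $T^{-1/4}$; equating the residual $T\rho^2$ with the descent budget and the $z$-rate $(\rho T)^{-1/2}$ is precisely what pins down $\rho=\Theta(T^{-1/2})$ and matches the best known deterministic smoothed-GDA complexity of \cite{zhang2020single}.
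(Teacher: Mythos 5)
Your proposal is correct and follows essentially the same route the paper takes, namely the telescoping framework of \cite{li2023nonsmooth} that the paper's one-line proof merely cites: absorb the $-24r\rho\,\mathbb{E}\|x^*(z_t)-x(y_+^t(z_t),z_t)\|^2$ term via Lemma~\ref{xstar_yplus} and Young's inequality at the cost of an $\mathcal{O}(\rho^2/\eta_y)$ per-step slack, telescope using the lower boundedness of $\Phi_t$, set $\rho=\Theta(T^{-1/2})$ so the accumulated slack is $\mathcal{O}(1)$, and observe that the $z$-term's $\rho^{-1}$ scaling makes $T^{-1/4}$ the binding rate before invoking the sufficient-condition lemma. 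Your diagnosis of the linear-versus-quadratic mismatch in the dual error bound as the source of the $T^{-1/4}$ rate is exactly right.
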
 
\begin{proof}
  In the previous subsection, we have established the decrease estimate of the potential function and a dual error bound. Following the proposed analysis framework in  \cite{li2023nonsmooth}, we obtain that if $\rho=\mathcal{O}(T^{-\frac{1}{2}})$ holds, the proposed algorithm achieves an $\mathcal{O}(T^{-\frac{1}{4}})$-GS result.

    
\end{proof}

Based on the convergence result in Theorem \ref{con_theo}, we find that the proposed stochastic algorithm requires an iteration complexity of $\mathcal{O}(\epsilon^{-4})$ to achieve an $\epsilon$-GS. It matches the best-known iteration complexity of deterministic algorithms for NC-C minimax optimization \cite{zhang2020single, NEURIPS2023_a961dea4, li2023nonsmooth}, while requiring only probabilistic full gradient evaluations via the parameter $p$. 
Nevertheless, in many large-scale problems, the full gradient computation is often unfeasible due to privacy concerns or computational intractability. Motivated by this, the next section will explore a full-gradient-free extension that uses refined variance reduction techniques with auxiliary gradient trackers, achieving comparable convergence without full gradient evaluations. 

\section{Variance-Reduced Smoothed Gradient Descent-Ascent without Full Gradient}
\par The gradient estimators $v$ and $w$ in Algorithm \ref{vr_agda} require the full gradient computation with a probability of $p$, which may not be computationally efficient for solving problems with large samples. To handle this issue, we explore another variance reduction technique without full gradient calculation, called ZeroSARAH, which is firstly proposed in \cite{Li2021ZeroSARAHEN} for minimization problems. In ZeroSARAH, it develops some additional auxiliary variables for better estimation of the full gradient. To solve general NC-C minimax problem \eqref{opti_pro}, we utilize the regularized function \eqref{K_func} and propose a smoothed gradient descent-ascent algorithm with the ZeroSARAH variance reduction technique. For simplicity, we denote $\nabla K_{i,t}:=\nabla K_i(x_t,z_t;y_t)$. The gradient estimator $v_t$ and $w_t$ take the following updates
\begin{align}
    v_{t}=&\frac{1}{b}\sum_{i\in B_t} \left(\nabla_x K_{i,t}-\nabla_x K_{i,t-1}\right)+(1-\lambda)v_{t-1}\notag\\
    &+\lambda\bigg(\frac{1}{b}\sum_{i\in B_t}(\nabla_x K_{i,t-1}-d_{i,t-1})+\frac{1}{n}\sum_{i=1}^n d_{i,t-1}\bigg),\label{v_up_rul}\\
    w_{t}=&\frac{1}{b}\sum_{i\in B_t} \left(\nabla_y K_{i,t}-\nabla_y K_{i,t-1}\right)+(1-\lambda)w_{t-1}\notag\\
    &+\lambda\bigg(\frac{1}{b}\sum_{i\in B_t}(\nabla_y K_{i,t-1}-h_{i,t-1})+\frac{1}{n}\sum_{i=1}^n h_{i,t-1}\bigg),\label{w_up_rul}
\end{align}
where $B_t$ denotes a randomly sampled batch of size $b$ at iteration $t$, $d_{i,t}$ and $h_{i,t}$ are the auxiliary variables. The proposed ZeroSARAH-SGDA algorithm is summarized in Algorithm \ref{vr_zesarah}.
\begin{algorithm}
\caption{ ZeroSARAH-based Smoothed Gradient Descent-Ascent (ZeroSARAH-SGDA) algorithm}
	\label{vr_zesarah}
	\begin{algorithmic} 
        \State (S.1) : Initialize: $(x_0,y_0,z_0)$, step sizes $\eta_x>0$, $\eta_y>0$, $\rho>0$, $\lambda>0$, batch-size $b>0$, number of epochs $T$ 
        \State $x_{-1}=x_0$, $y_{-1}=y_0$, $z_{-1}=z_0$, 
        \State $v_{-1}=0$, $w_{-1}=0$, $d_{i,-1}=0$, $h_{i,-1}=0$ for all $i \in [n]$ 
        \State (S.2) Iterations:
        \For {$t=0,\cdots,T-1$}
        \State Randomly select a set of samples $B_t$ with the batch-size $|B_t|=b$
        \State Update the stochastic gradient estimators $v_t$ and $w_t$ by \eqref{v_up_rul} and \eqref{w_up_rul}, respectively
        \State $x_{t+1}=P_{\mathcal{X}}(x_t-\eta_x v_t)$
        \State $y_{t+1}=P_{\mathcal{Y}}(y_t+\eta_y w_t)$
        \State $z_{t+1}=z_{t}+\rho(x_{t+1}-z_t)$
        \State $d_{i,t}= \begin{cases} \nabla_x K_{i,t} & \text { for } i\in B_t \\ d_{i,t-1} & \text { for } i\notin B_t
        \end{cases}$
        \State $h_{i,t}= \begin{cases} \nabla_y K_{i,t} & \text { for } i\in B_t \\ h_{i,t-1} & \text { for } i\notin B_t
        \end{cases}$
        \EndFor
	\end{algorithmic}
\end{algorithm}

\begin{remark}
    Using the regularized function $K$, this paper extends the ZeroSARAH technique in minimization problems to general nonconvex-concave minimax problems, avoiding the stochastic gradient variance. Compared with the PVR-SGDA algorithm in Algorithm \ref{vr_agda}, the ZeroSARAH-SGDA does not require any full gradient computation, albeit by introducing two auxiliary gradient trackers, $d_{i,t}$ and $h_{i,t}$.  Although the ZeroSARAH-SGDA is more intricate than PVR-SGDA and demands additional storage space for auxiliary variables, we will show that ZeroSARAH-SGDA can achieve a comparable iteration complexity to PVR-SGDA, while reducing the number of gradient oracle calls per iteration.
\end{remark}
\subsection{Convergence analysis}
To analyze the proposed ZeroSARAH-SGDA algorithm, we develop a modified potential function as follows
\begin{align}\label{phi_func_def2}
    \Phi_t&=\ V_t+\gamma (\|\nabla_x K_t-v_t\|^2+\|\nabla_y K_t-w_t\|^2)\notag\\
&+\tau\left(\!\frac{1}{n}\sum_{i=1}^n\|\nabla_x K_{i,t}\!-d_{i,t}\|^2+\frac{1}{n}\sum_{i=1}^n \|\nabla_y K_{i,t}\!-h_{i,t}\|^2\!\right),
\end{align}
where $V_t\triangleq K_t-2d(y_t,z_t)+2P(z_t)$, $\gamma>0$ and $\tau>0$ are constant parameters. The modified potential function is lower bounded.
\par Similar to the theoretical analysis in Section \ref{proof_sec}, we quantify the change in each term of the potential function $\Phi_t$ after one round of updates in ZeroSARAH-SGDA. The main differences from the analysis of PVR-SGDA are the following descent properties of gradient estimators, whose proofs are provided in Appendix \ref{des_v_nab_proof}.
\begin{lemma} \label{V2_nabla_lem}
Suppose Assumption \ref{f_assum} holds. The stochastic gradient estimators $v_t$ and $w_t$ generated by Algorithm \ref{vr_zesarah} satisfy
\begin{align}\label{nabx_v_diff}
    &\mathbb{E}[\|\nabla_x K_t-v_t\|^2]-\mathbb{E}[\|\nabla_x K_{t+1}-v_{t+1}\|^2]\notag\\
    \geq \ & \lambda  \mathbb{E}[\|v_{t}-\nabla_x K_t\|^2]-\frac{2\lambda^2}{b}\mathbb{E}\left[\frac{1}{n}\sum_{i=1}^n \|\nabla_x K_{i,t}-d_{i,t}\|^2\right]\notag\\
    &-\frac{6(L+r)^2}{b}\mathbb{E}[\|x_{t+1}-x_{t}\|^2]-\frac{6L^2}{b}\mathbb{E}[\|y_{t+1}-y_{t}\|^2]\notag\\
    &-\frac{6r^2}{b}\mathbb{E}[\|z_{t+1}-z_{t}\|^2],
\end{align}
and 
\begin{align}\label{naby_w_diff}
    &\mathbb{E}[\|\nabla_y K_t-w_t\|^2]-\mathbb{E}[\|\nabla_y K_{t+1}-w_{t+1}\|^2]\notag\\
    \geq \ & \lambda  \mathbb{E}[\|w_{t}-\nabla_y K_t\|^2]-\frac{2\lambda^2}{b}\mathbb{E}\left[\frac{1}{n}\sum_{i=1}^n \|\nabla_y K_{i,t}-h_{i,t}\|^2\right]\notag\\
    &-\frac{4L^2}{b}\mathbb{E}[\|x_{t+1}-x_{t}\|^2]-\frac{4L^2}{b}\mathbb{E}[\|y_{t+1}-y_{t}\|^2],
\end{align}
where $0<\lambda<1$.
\end{lemma}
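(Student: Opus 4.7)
The plan is to exploit the key conditional-unbiasedness structure of the ZeroSARAH update. Let $\mathcal{F}_{t+1}$ denote the $\sigma$-algebra containing all randomness up through iterate $(x_{t+1},y_{t+1},z_{t+1})$ and trackers $\{d_{i,t},h_{i,t}\}$ but \emph{before} drawing the new mini-batch $B_{t+1}$. The first step is to take $\mathbb{E}[\,\cdot\mid\mathcal{F}_{t+1}]$ on both sides of the update for $v_{t+1}$: using $\mathbb{E}\bigl[\tfrac{1}{b}\sum_{i\in B_{t+1}}a_i\bigm|\mathcal{F}_{t+1}\bigr]=\tfrac{1}{n}\sum_{i=1}^n a_i$ together with the cancellation $\tfrac{1}{n}\sum_i(\nabla_x K_{i,t}-d_{i,t})+\tfrac{1}{n}\sum_i d_{i,t}=\nabla_x K_t$, one obtains the key identity
\[
\nabla_x K_{t+1}-\mathbb{E}[v_{t+1}\mid\mathcal{F}_{t+1}]=(1-\lambda)(\nabla_x K_t-v_t),
\]
so that the residual bias contracts by a factor $(1-\lambda)$. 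This is precisely the design purpose of the full-gradient correction term involving $\tfrac{1}{n}\sum_i d_{i,t-1}$ in \eqref{v_up_rul}.

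The second step applies the bias--variance decomposition
\[
\mathbb{E}\bigl[\|\nabla_x K_{t+1}-v_{t+1}\|^2\mid\mathcal{F}_{t+1}\bigr]=(1-\lambda)^2\|\nabla_x K_t-v_t\|^2+\operatorname{Var}(v_{t+1}\mid\mathcal{F}_{t+1})
\]
and bounds the conditional variance via the i.i.d.\ structure of $B_{t+1}$. Writing the random part as $\tfrac{1}{b}\sum_{i\in B_{t+1}} u_i$ with $u_i:=(\nabla_x K_{i,t+1}-\nabla_x K_{i,t})+\lambda(\nabla_x K_{i,t}-d_{i,t})$, one has $\operatorname{Var}\leq\tfrac{1}{bn}\sum_i\|u_i\|^2$. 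The Young split $\|u_i\|^2\leq 2\|\nabla_x K_{i,t+1}-\nabla_x K_{i,t}\|^2+2\lambda^2\|\nabla_x K_{i,t}-d_{i,t}\|^2$ directly produces the $-\tfrac{2\lambda^2}{b}\cdot\tfrac{1}{n}\sum_i\|\nabla_x K_{i,t}-d_{i,t}\|^2$ term of \eqref{nabx_v_diff}. Per-sample Lipschitz continuity of $\nabla_x K_i$---which is $(L+r)$-Lipschitz in $x$, $L$-Lipschitz in $y$, and $r$-Lipschitz in $z$---combined with another three-term Young inequality bounds $\|\nabla_x K_{i,t+1}-\nabla_x K_{i,t}\|^2$ by $3(L+r)^2\|x_{t+1}-x_t\|^2+3L^2\|y_{t+1}-y_t\|^2+3r^2\|z_{t+1}-z_t\|^2$, yielding the $\tfrac{6(L+r)^2}{b}, \tfrac{6L^2}{b}, \tfrac{6r^2}{b}$ coefficients after multiplication by the $\tfrac{2}{b}$ prefactor. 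Using $(1-\lambda)^2\leq 1-\lambda$ for $\lambda\in(0,1)$ and taking total expectations closes \eqref{nabx_v_diff}. The proof of \eqref{naby_w_diff} is identical in structure, the only differences being that $\nabla_y K_i=\nabla_y f_i$ is independent of $z$ and only $L$-Lipschitz in both $x$ and $y$, so a two-term Young split $\|\nabla_y K_{i,t+1}-\nabla_y K_{i,t}\|^2\leq 2L^2\|x_{t+1}-x_t\|^2+2L^2\|y_{t+1}-y_t\|^2$ suffices and produces the sharper $\tfrac{4L^2}{b}$ coefficients with no $z$-movement term.

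I expect the main obstacle to be conceptual rather than computational: correctly setting up the filtration $\mathcal{F}_{t+1}$ and tracing through how the tracker correction $\lambda\cdot\tfrac{1}{n}\sum_i d_{i,t}$ in the update forces the residual bias to contract by $(1-\lambda)$ rather than by $1$, which is what ultimately delivers the $\lambda\,\mathbb{E}[\|v_t-\nabla_x K_t\|^2]$ contraction term driving the overall potential-function descent. A minor technical point worth noting is that Assumption \ref{f_assum} is stated for the averaged $f$, whereas the variance bound implicitly needs the same Lipschitz constant $L$ for each component $f_i$; this is a standard strengthening in the finite-sum variance-reduction literature and I would record it explicitly at the outset of the proof.
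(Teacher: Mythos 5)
Your proposal is correct and follows essentially the same route as the paper's proof (which mirrors \cite[Lemma 2]{Li2021ZeroSARAHEN}): the conditional-unbiasedness decomposition giving the $(1-\lambda)$ contraction of the residual, the $1/b$ batch-variance factor, Young splits producing the $2\lambda^2/b$ and $6(\cdot)^2/b$ (resp.\ $4L^2/b$) coefficients, and $(1-\lambda)^2\le 1-\lambda$. Your closing remark that the argument needs per-component Lipschitz continuity of each $\nabla K_i$, a strengthening of Assumption \ref{f_assum} as stated, is a valid observation that the paper's own proof glosses over.
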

Then, the following lemma provides the descent properties of the terms $\mathbb{E}\left[\frac{1}{n}\sum_{i=1}^n\|\nabla_x K_{i,t}-d_{i,t}\|^2\right]$ and $\mathbb{E}\left[\frac{1}{n}\sum_{i=1}^n\|\nabla_y K_{i,t}-h_{i,t}\|^2\right]$ in the upper bounds of \eqref{nabx_v_diff} and \eqref{naby_w_diff}.
\begin{lemma}\label{nab_d_lem}
Suppose Assumption \ref{f_assum} holds. We have that for any $\beta>0$,
\begin{align}\label{nabx_d_diff}
    &\mathbb{E}\!\left[\!\frac{1}{n}\sum_{i=1}^n\|\nabla_x K_{i,t}
    \!-d_{i,t}\|^2\right]\!-\!\mathbb{E}\!\left[\!\frac{1}{n}\sum_{i=1}^n\|\nabla_x K_{i,t+1}\!-d_{i,t+1}\|^2\!\right]\notag\\
    &\geq \  \left(1-\zeta\right)\mathbb{E}\left[\frac{1}{n}\sum_{i=1}^n \|\nabla_x K_{i,t}-d_{i,t}\|^2\right]\notag\\
    &\qquad -3\xi(L+r)^2 \mathbb{E}\left[\|x_{t+1}\!-x_{t}\|^2\right]-3\xi L^2\mathbb{E}\left[\|y_{t+1}-y_{t}\|^2\right]\notag\\
    &\qquad -3\xi r^2 \mathbb{E}\left[\|z_{t+1}-z_{t}\|^2\right],
\end{align}
and
\begin{align}\label{naby_h_diff}
    &\mathbb{E}\!\left[\!\frac{1}{n}\sum_{i=1}^n\|\nabla_y K_{i,t}\!-h_{i,t}\|^2\!\right]\!-\!\mathbb{E}\!\left[\!\frac{1}{n}\sum_{i=1}^n\|\nabla_y K_{i,t+1}\!-h_{i,t+1}\|^2\!\right]\notag\\
    \geq & \left(1-\zeta\right)\mathbb{E}\left[\frac{1}{n}\sum_{i=1}^n \|\nabla_y K_{i,t}-\!h_{i,t}\|^2\right]\notag\\
    &-2 \xi L^2\mathbb{E}\left[\|x_{t+1}-x_{t}\|^2+\|y_{t+1}-y_{t}\|^2\right],
\end{align}
where $\zeta=\left(1-\frac{b}{n}\right)(1+\beta)$ and $\xi=\left(1-\frac{b}{n}\right)\left(1+\frac{1}{\beta}\right)$.
\end{lemma}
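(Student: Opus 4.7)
\textbf{Proof Proposal for Lemma \ref{nab_d_lem}.} The plan is to exploit the fact that the auxiliary trackers $d_{i,t}$ and $h_{i,t}$ are refreshed only for indices in the random batch $B_{t+1}$, so a careful tower-property argument isolates a contractive factor $(1-b/n)$ from the sampling, and then the remaining drift is absorbed by the per-sample Lipschitz continuity of $\nabla K_i$. I would work out the $d_{i,t}$ case in detail and then invoke symmetry for $h_{i,t}$.

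First, let $\mathcal{F}_t$ denote the filtration containing all randomness through the update of $(x_{t+1}, y_{t+1}, z_{t+1})$ but not the batch $B_{t+1}$. For any fixed $i \in [n]$, the update rule gives $\|\nabla_x K_{i,t+1} - d_{i,t+1}\|^2 = 0$ when $i \in B_{t+1}$ and equals $\|\nabla_x K_{i,t+1} - d_{i,t}\|^2$ otherwise. Since $B_{t+1}$ is a uniformly random subset of size $b$, $\mathbb{P}(i \in B_{t+1} \mid \mathcal{F}_t) = b/n$, so
\begin{equation*}
\mathbb{E}\bigl[\|\nabla_x K_{i,t+1} - d_{i,t+1}\|^2 \,\big|\, \mathcal{F}_t\bigr] \;=\; \Bigl(1-\tfrac{b}{n}\Bigr)\,\|\nabla_x K_{i,t+1} - d_{i,t}\|^2.
\end{equation*}

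Next, I would apply Young's inequality with parameter $\beta>0$ to split this residual about $\nabla_x K_{i,t}$:
\begin{equation*}
\|\nabla_x K_{i,t+1} - d_{i,t}\|^2 \leq (1+\beta)\|\nabla_x K_{i,t} - d_{i,t}\|^2 + \bigl(1+\tfrac{1}{\beta}\bigr)\|\nabla_x K_{i,t+1} - \nabla_x K_{i,t}\|^2.
\end{equation*}
The drift term is bounded using the structural identity $\nabla_x K_i(x,z;y) = \nabla_x f_i(x,y) + r(x-z)$ together with the (per-sample) Lipschitz property, yielding $\|\nabla_x K_{i,t+1} - \nabla_x K_{i,t}\| \leq (L+r)\|x_{t+1}-x_t\| + L\|y_{t+1}-y_t\| + r\|z_{t+1}-z_t\|$. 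Squaring via $(a+b+c)^2 \leq 3(a^2+b^2+c^2)$ supplies the factor of $3$ and matches the coefficients $(L+r)^2$, $L^2$, $r^2$ appearing in \eqref{nabx_d_diff}.

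Averaging over $i$, taking total expectation, and identifying the constants $\zeta = (1-b/n)(1+\beta)$ and $\xi = (1-b/n)(1+1/\beta)$, a direct rearrangement gives \eqref{nabx_d_diff}. The argument for \eqref{naby_h_diff} is entirely parallel: since $\nabla_y K_i = \nabla_y f_i$ has no $z$-dependence, only two Lipschitz terms appear, and one uses $(a+b)^2 \leq 2(a^2+b^2)$ in place of the three-term expansion, producing the factor $2L^2$ in front of $\|x_{t+1}-x_t\|^2 + \|y_{t+1}-y_t\|^2$. The only mild obstacle is bookkeeping, namely tracking the coupling between the $x$-drift and the $r(x-z)$ part of $\nabla_x K_i$ so that the correct $(L+r)^2$, $L^2$, $r^2$ coefficients emerge; this is a purely mechanical check once the Young's inequality split is in place.
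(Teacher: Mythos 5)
Your proposal is correct and follows essentially the same route as the paper's proof: conditioning on the uniform batch to extract the $(1-\tfrac{b}{n})$ factor, a Young's inequality split with parameter $\beta$ around $\nabla_x K_{i,t}$, and the Lipschitz structure of $\nabla_x K_i = \nabla_x f_i + r(\cdot - z)$ to produce the $3(L+r)^2$, $3L^2$, $3r^2$ coefficients (and the two-term analogue for the $y$-block). The only difference is a harmless one-step shift in the time indexing.
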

\par Using Lemmas \ref{V2_nabla_lem}, \ref{nab_d_lem} and \ref{V_ineq_lem}, the descent property of potential function $\Phi_t$ is shown in the following proposition, whose proof is provided in Appendix \ref{Phi_SARAH_proof}.
\begin{proposition}\label{des_phi2}
Suppose Assumption \ref{f_assum} holds.  Let $2L \leq r\leq 4L$, $\gamma = \frac{2}{\lambda}+\frac{2}{5\lambda L}$, and $\tau=2\gamma \lambda^2$. The step-sizes satisfy $\lambda=\frac{1}{b}$, $b= a\sqrt{n}$, $a\geq 2$,
\begin{align*}
    \eta_x\leq \ & \frac{b}{b(1+24L+2L^2)+310L^2\gamma+160b\tau L^2 b_+},\\
    \eta_y \leq \ &\min \left\{\frac{b}{b\!\left(2+18L\!+20\tau L^2 b_+ \!\right)\!+40\gamma L^2},\frac{1}{4L(1+\!\omega)^2}\right\},\\
    \rho \leq \ & \frac{4b}{1200b+36r \gamma+18b\tau r b_+},
\end{align*}
where $b_+=1+b$.
Then, for any $t\geq 0$, we have
    \begin{align}
    &\mathbb{E}[\Phi_t-\Phi_{t+1}]\notag\\
    \geq & c_x \mathbb{E} [\|x_{t+1}\!-x_t\|^2]\!+\!c_y \mathbb{E}[\|y_t\!-y_+^t(z_t)\|^2]\!+\!c_z\mathbb{E} [\|z_{t+1}\!-z_t\|^2] \notag\\
        &+c_v\mathbb{E}[\|\nabla_x K_t-v_t\|^2]+c_w\mathbb{E}[\|\nabla_y K_t-w_t\|^2]\notag\\
        &+c_\tau \mathbb{E}\!\left[\frac{1}{n}\!\sum_{i=1}^n\|\nabla_x K_{i,t}\!-d_{i,t}\|^2\!+\!\frac{1}{n}\!\sum_{i=1}^n \|\nabla_y K_{i,t}-h_{i,t}\|^2\right]\notag\\
        &-24r\rho \kappa \mathbb{E}[\|y_t-y_+^t(z_t)\|],
\end{align}
where $c_x=\frac{1}{2\eta_x}$, $c_y=\frac{1}{4\eta_y}$, $c_z=\frac{r}{6\rho}$, $c_v=c_w=\frac{\gamma \lambda}{2}$, $c_{\tau}=\frac{\tau}{\sqrt{n}}$.
\end{proposition}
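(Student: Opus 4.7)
The plan is to mirror the proof strategy of Proposition 4.1, with extra bookkeeping for the auxiliary-tracker terms. Concretely, I would combine Lemma 4.1 (decrease of $V_t$), $\gamma$ times Lemma 5.1 (decrease of the estimator errors $\|\nabla_x K_t - v_t\|^2$ and $\|\nabla_y K_t - w_t\|^2$), and $\tau$ times Lemma 5.2 (decrease of the auxiliary-tracker errors) into a single inequality for $\Phi_{t} - \Phi_{t+1}$. This weighted sum produces a linear combination of the six quadratic quantities $\|x_{t+1}-x_t\|^2$, $\|y_{t+1}-y_t\|^2$, $\|z_{t+1}-z_t\|^2$, $\|\nabla_x K_t - v_t\|^2$, $\|\nabla_y K_t - w_t\|^2$, and the aggregated auxiliary errors, together with the two residual quantities $\|x_t - x_+(y_t,z_t)\|^2$ and $\|x^*(z_t) - x(y_+^t(z_t),z_t)\|^2$ inherited from Lemma 4.1.

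Next I would eliminate the two residual terms. The first is handled by nonexpansiveness of the projection $P_\mathcal{X}$: since $x_{t+1} = P_\mathcal{X}(x_t - \eta_x v_t)$ and $x_+(y_t,z_t) = P_\mathcal{X}(x_t - \eta_x \nabla_x K_t)$, the bound $\|x_t - x_+(y_t,z_t)\|^2 \le 2\|x_{t+1}-x_t\|^2 + 2\eta_x^2 \|\nabla_x K_t - v_t\|^2$ lets me absorb the corresponding negative term into the coefficients of $\|x_{t+1}-x_t\|^2$ and $\|\nabla_x K_t - v_t\|^2$. For the second residual, invoking Lemma 4.2 directly gives $-24r\rho\|x^*(z_t) - x(y_+^t(z_t),z_t)\|^2 \ge -24r\rho \kappa \|y_t - y_+^t(z_t)\|$, which becomes the trailing linear term in the claimed inequality. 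In parallel, I would convert $\|y_{t+1}-y_t\|^2$ into $\|y_t - y_+^t(z_t)\|^2$ via triangle inequality and nonexpansiveness of $P_\mathcal{Y}$, where the cross-term $\|x_t - x(y_t,z_t)\|^2$ is controlled by strong convexity of $K(\cdot, z_t; y_t)$ (this is precisely what motivates the constant $\omega$ appearing in Lemma 4.1).

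After these substitutions, every quadratic on the right-hand side of Proposition 5.2 appears with a coefficient that is a polynomial in the parameters $(\eta_x,\eta_y,\rho,\gamma,\tau,\lambda,b)$. The heart of the argument is then pure bookkeeping: the choices $\lambda = 1/b$, $b = a\sqrt{n}$, $\gamma = 2/\lambda + 2/(5\lambda L)$, and $\tau = 2\gamma\lambda^2$ are tuned so that the negative contribution $-\tfrac{2\gamma\lambda^2}{b}$ from Lemma 5.1 is overcome by the positive contribution $\tau(1-\zeta) \sim \tau b/n$ from Lemma 5.2 (using $\zeta = (1-b/n)(1+\beta)$ with $\beta$ chosen of order $1/b$), leaving a residual of order $\tau/\sqrt{n}$ which becomes the coefficient $c_\tau$.

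The step-size upper bounds on $\eta_x$, $\eta_y$, and $\rho$ are finally derived by requiring that the aggregated coefficients of $\|x_{t+1}-x_t\|^2$, $\|y_t - y_+^t(z_t)\|^2$, and $\|z_{t+1}-z_t\|^2$ dominate $1/(2\eta_x)$, $1/(4\eta_y)$, and $r/(6\rho)$, respectively. These are the same kind of threshold conditions as in Proposition 4.1, but now inflated by the extra ZeroSARAH contributions proportional to $\tau L^2 b_+$ and $b\tau r b_+$, which accounts for the new constants $310$, $160$, $40$, $20$, $36$, $18$ appearing in the stated bounds. The main obstacle is not conceptual but combinatorial: every term from Lemmas 4.1, 5.1, and 5.2 contributes to at least three coefficients after the residual substitutions, so correctly tracking which summand goes where and verifying that the nonnegative threshold is met for each coefficient simultaneously is the delicate part of the calculation.
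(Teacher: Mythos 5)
Your proposal is correct and follows essentially the same route as the paper's own proof: it forms the telescoped combination of Lemma \ref{V_ineq_lem}, $\gamma$ times Lemma \ref{V2_nabla_lem}, and $\tau$ times Lemma \ref{nab_d_lem}, absorbs $\|x_t-x_+(y_t,z_t)\|^2$ via projection nonexpansiveness and $\|y_{t+1}-y_t\|^2$ via the $y_+^t(z_t)$ relation, converts the remaining negative term with the dual error bound of Lemma \ref{xstar_yplus} (which you cite by a slightly off number), and then verifies the coefficient thresholds under the stated parameter choices, including the key cancellation $\tau(1-\zeta)-2\gamma\lambda^2/b\geq \tau/\sqrt{n}$ with $\beta=1/b$ and $b=a\sqrt{n}$. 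The only caveat is that you outline rather than execute the constant-tracking, but the structure and all key ingredients match the paper's argument.
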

\par Using Proposition \ref{des_phi2}, we establish the main theorem concerning the iteration complexity of Algorithm \ref{vr_zesarah} with respect to the above-mentioned standard stationary measure (GS in Definition \ref{opt_def}) for \eqref{opti_pro}.
\begin{theorem}\label{con_theo_zeroSARAH}
    Under the setting of Proposition \ref{des_phi2}, for any $T>0$, there exists a $t\in \{1,\cdots,T\}$ such that $(x^{t+1},y^{t+1})$ is an $\mathcal{O}(T^{-\frac{1}{4}})$-GS in expectation if $\rho=\mathcal{O}(T^{-\frac{1}{2}})$. In addition, the gradient complexity of Algorithm \ref{vr_zesarah} is $\mathcal{O}(\sqrt{n}\epsilon^{-4})$.
\end{theorem}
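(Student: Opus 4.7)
The plan is to mirror the telescoping strategy used for Theorem~\ref{con_theo}, now applied to the augmented potential function $\Phi_t$ from Proposition~\ref{des_phi2}. Since $\Phi_t$ is bounded below and $\Phi_0$ is finite, summing the per-iteration descent inequality of Proposition~\ref{des_phi2} from $t=0$ to $T-1$ and using $\mathbb{E}[\Phi_T]\geq \inf_t\Phi_t$, I control the total of all positive quantities on the right-hand side by $\Phi_0-\inf_t\Phi_t$ plus the aggregated perturbation $24r\rho\kappa\sum_{t=0}^{T-1}\mathbb{E}\|y_t-y_+^t(z_t)\|$. This perturbation is absorbed into the $c_y$-weighted positive term by Young's inequality; using $c_y=1/(4\eta_y)$, $\kappa=\mathcal{O}(1/\eta_y)$ from Lemma~\ref{xstar_yplus}, and $\rho=\mathcal{O}(T^{-1/2})$, each Young residual $(24r\rho\kappa)^2/(2c_y)=\mathcal{O}(T^{-1}/\eta_y)$ contributes a total of order $\mathcal{O}(1)$ over $T$ steps.

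After this absorption, each summed positive quantity is $\mathcal{O}(1)$. Dividing by $T$ and renormalizing to the stationarity scales, the three iterate-motion averages become $\mathbb{E}\|x_{t+1}-x_t\|^2/\eta_x^2=\mathcal{O}(1/(T\eta_x))$, $\mathbb{E}\|y_t-y_+^t(z_t)\|^2/\eta_y^2=\mathcal{O}(1/(T\eta_y))$, and $\mathbb{E}\|z_{t+1}-z_t\|^2/\rho^2=\mathcal{O}(1/(T\rho))=\mathcal{O}(T^{-1/2})$; the estimator and tracker error averages $\mathbb{E}\|\nabla_x K_t-v_t\|^2$, $\mathbb{E}\|\nabla_y K_t-w_t\|^2$, together with the $c_\tau$-weighted tracker quantities, decay at rate $\mathcal{O}(1/T)$. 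The slowest component is therefore the $z$-term at $\mathcal{O}(T^{-1/2})$. A pigeonhole argument applied to the sum over $t$ of the five squared normalized stationarity quantities shows that there exists an index $t\in\{0,\ldots,T-1\}$ at which all five are simultaneously $\mathcal{O}(T^{-1/2})$, hence each is $\mathcal{O}(T^{-1/4})$ in norm. The sufficient-condition lemma preceding Theorem~\ref{con_theo} then certifies that $(x_{t+1},y_{t+1})$ is an $\mathcal{O}(T^{-1/4})$-GS in expectation, giving the iteration complexity $T=\mathcal{O}(\epsilon^{-4})$ to reach an $\epsilon$-GS.

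For the gradient complexity, each iteration of Algorithm~\ref{vr_zesarah} samples only one batch $B_t$ of size $b=a\sqrt{n}$ and queries $\mathcal{O}(\sqrt{n})$ component gradients to assemble $v_t,w_t$ and refresh $d_{i,t},h_{i,t}$; no full-gradient computation is performed. Multiplying the per-iteration cost $\mathcal{O}(\sqrt{n})$ by $T=\mathcal{O}(\epsilon^{-4})$ yields the total oracle complexity $\mathcal{O}(\sqrt{n}\epsilon^{-4})$.

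The main obstacle lies not in the telescoping step itself but in establishing Proposition~\ref{des_phi2}, which requires the prescribed step-sizes to simultaneously (i) absorb the $\mathcal{O}(1/b)$ coupling terms produced by the ZeroSARAH variance identity in Lemma~\ref{V2_nabla_lem} into the corresponding positive descent terms for $x,y,z$, (ii) absorb the tracker-error coefficients $3\xi(L+r)^2$, $3\xi L^2$, $3\xi r^2$ from Lemma~\ref{nab_d_lem} through the coupled choice $\tau=2\gamma\lambda^2$, $\lambda=1/b$, and $b=a\sqrt{n}$ so that the $\zeta=(1-b/n)(1+\beta)$ contraction factor stays bounded away from one, and (iii) keep $c_x,c_y,c_z,c_v,c_w,c_\tau$ strictly positive. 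Once Proposition~\ref{des_phi2} is secured, the remaining argument is structurally identical to that of Theorem~\ref{con_theo}.
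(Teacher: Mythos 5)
Your proposal is correct and follows essentially the same route as the paper, which simply defers to the telescoping/error-bound framework of Theorem \ref{con_theo} (itself inherited from the smoothed-GDA analysis it cites) and then counts the $b=a\sqrt{n}$ per-iteration gradient queries. In fact you supply more detail than the paper's one-line proof — in particular the Young-inequality absorption of the $24r\rho\kappa\,\mathbb{E}\|y_t-y_+^t(z_t)\|$ term and the identification of the $z$-block as the $\mathcal{O}(T^{-1/2})$ bottleneck — and these details are consistent with the constants in Proposition \ref{des_phi2} and Lemma \ref{xstar_yplus}.
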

\begin{proof}
    Following a similar proof to that of Theorem \ref{con_theo}, we obtain that the proposed Algorithm \ref{vr_zesarah} achieves an $\epsilon$-GS with an iteration complexity of $\mathcal{O}(\epsilon^{-4})$. In addition, the proposed algorithm requires $a\sqrt{n}$ ($a\geq 2$) gradient computations at each iteration, hence, the gradient complexity is $\mathcal{O}(\sqrt{n}\epsilon^{-4})$. 
\end{proof}
\begin{remark}\label{comp_rem}
    Based on Theorem \ref{con_theo_zeroSARAH}, the ZeroSARAH-SGDA algorithm achieves the same iteration complexity as that of the PVR-SGDA algorithm, while reducing the number of gradient oracle calls at each iteration. 
    Although the ZeroSARAH-SGDA algorithm exhibits a lower gradient complexity than the PVR-SGDA algorithm  by eliminating the need for full gradient computations, it adopts a more intricate design and necessitates additional storage space for maintaining and updating the auxiliary gradient trackers $d_{i,t}$ and $h_{i,t}$. Consequently, a fundamental trade-off emerges between Algorithm \ref{vr_agda} and Algorithm \ref{vr_zesarah}, balancing computational efficiency against memory resource demands.
\end{remark}

\section{Numerical Results}\label{sec:simulation}
This section applies the proposed PVR-SGDA and ZeroSARAH-SGDA algorithms to the robust logistic regression and data poisoning to demonstrate their practical efficacy. The logistic regression problem acts as a standard test ground for various algorithms in robust learning, such as \cite{zhang2022sapd,den_nonmini_nips}. Data Poisoning is an adversarial attack that the attacker tries to manipulate the training dataset. In both applications, we compare the proposed two algorithms with several existing algorithms, including the popular StocGDA algorithm \cite{SGDA} and the SVRG-based variance-reduced AGDA (VR-AGDA) algorithm \cite{NEURIPS2020_0cc6928e}. It is important to note that VR-AGDA is designed for minimax problems that satisfy the one-sided Polyak-{\L}ojasiewicz inequality, and its theoretical analysis does not directly apply to the NC-C minimax problems considered here. We only focus on the numerical performance of VR-AGDA in this context. 

\subsection{Robust Logistic Regression}
\begin{figure*}[t]
	\centering
	\subfigure{
		\includegraphics[width=8cm]{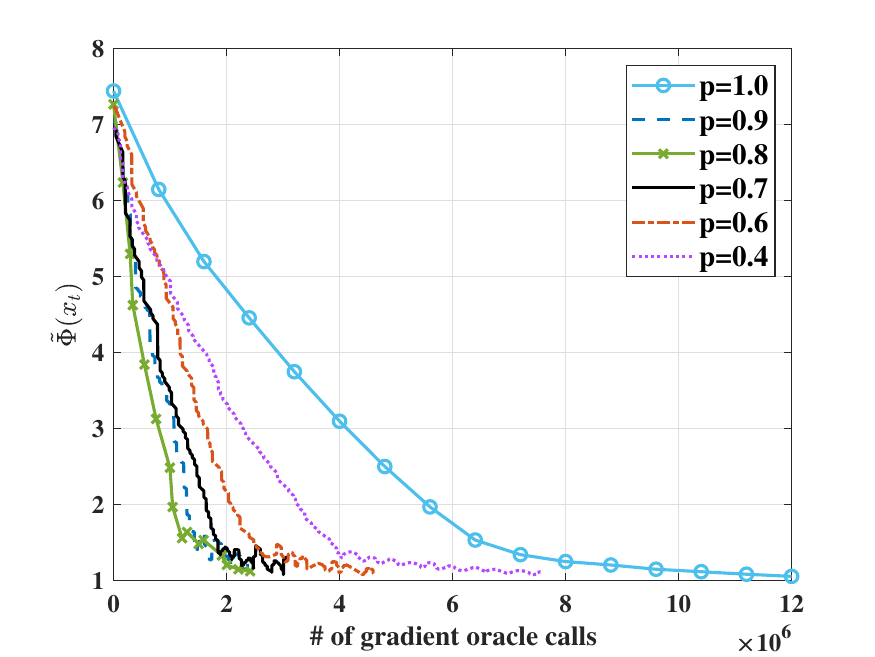}
		\label{RLR_posi_fig}
	}\qquad
	\subfigure{
		\includegraphics[width=8cm]{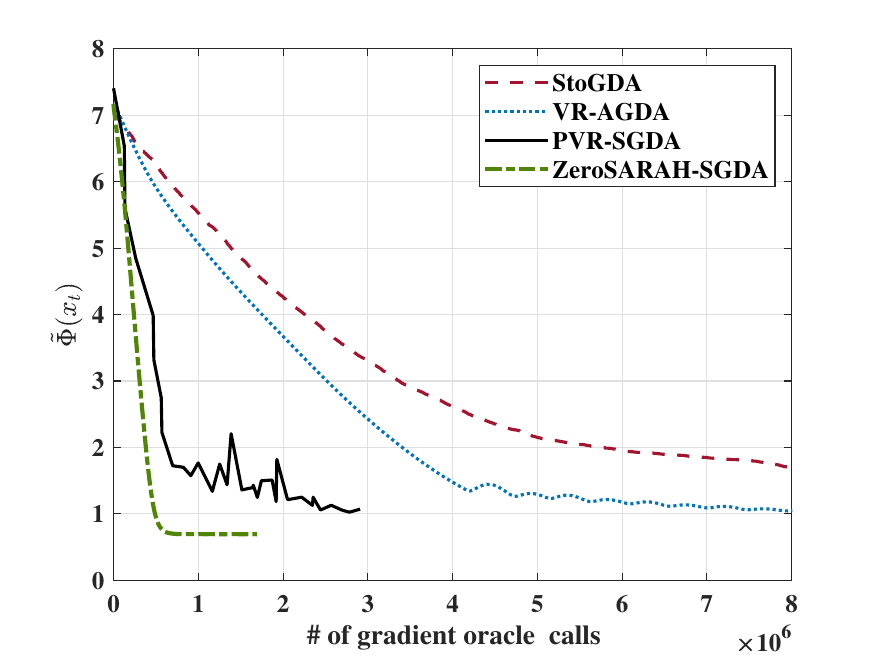}
		\label{RLR_a9a_fig}
	}
	\caption{(a) Convergence of  PVR-SGDA algorithm with different $p$. (b) Performance for different algorithms.}
	\label{RLR_fig}
\end{figure*}
\begin{figure}
	\centering
		\includegraphics[width=8cm]{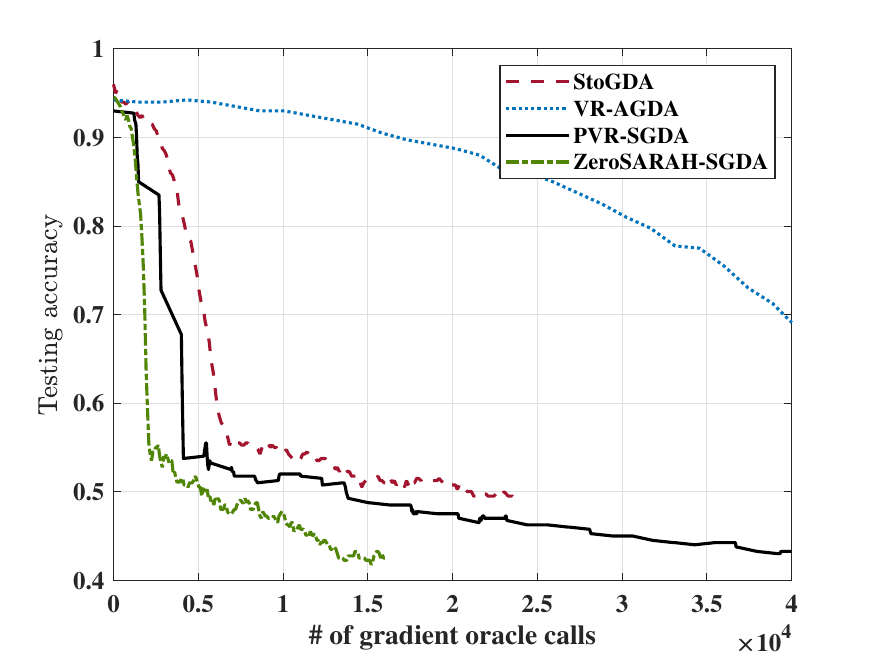}
	\caption{Testing accuracy with respect to gradient oracle calls in data poisoning.}
 \label{poison_fig}
\end{figure}
 For a public dataset $\left\{\left(a_i, b_i\right)\right\}_{i=1}^n$, where $a_i \in \mathbb{R}^d$ is the feature vector and $b_i \in\{-1,1\}$ is the label, the nonconvex-regularized problem  is formulated as follows:
$$
\min _{x \in \mathbb{R}^d} \max _{y \in \Delta_n} f(x, y)=\sum_{i=1}^n y_i \log \left(1+\exp \left(-b_i a_i^\top x\right)\right)+g(x),
$$
where $y_i$ is the $i$-th component of variable $y$ and $\Delta_n$ denotes the simplex in $\mathbb{R}^n$. The nonconvex regularization $g$ has the form $g(x):=\lambda_2 \sum_{i=1}^d \alpha x_i^2/(1+\alpha x_i^2)$. 
Following the settings in \cite{SREDA_nips2020,den_nonmini_nips}, we set $\lambda_1=1/n^2$, $\lambda_2=0.001$, and $\alpha=10$ in our experiment.
We conduct the experiment on the public dataset a9a, where $d=123$ and $n=32561$. To measure the convergence performance of algorithms, we evaluate the function value $\tilde{\Phi}(x)=\max_{y\in \Delta_n} f(x,y)$ with respect to the number of gradient oracles. 

\par Figure \ref{RLR_posi_fig} illustrates the convergence trajectories of the PVR-SGDA algorithm for different values of the probability $p$. It can be observed that as $p$ increases, the convergence rate initially accelerates but then decreases as $p$ approaches 1. When $p = 1$, the proposed algorithm reduces to the deterministic smoothed GDA algorithm as described in \cite{zhang2020single}. When $p$ increases to a certain value, such as $p=0.8$ in Figure \ref{RLR_posi_fig}, the convergence trajectory slows down due to the increased gradient computation burden at each iteration. Thus, a trade-off exists between the choice of $p$ and the overall convergence efficiency.  
\par In addition, we compare the performance of the proposed PVR-SGDA and ZeroSARAH-SGDA algorithms with the existing StocGDA and VR-AGDA algorithms. The convergence trajectories with respect to the number of gradient oracle calls are presented in Figure \ref{RLR_a9a_fig}. From these trajectories, we observe that the proposed PVR-SGDA and ZeroSARAH-SGDA algorithms converge faster than the other baseline algorithms, thereby validating the effectiveness of the proposed algorithms. In addition, the proposed ZeroSARAH-SGDA algorithm converges faster with respect to the number of gradient oracle calls than that of the PVR-SGDA algorithm, which verifies the lower gradient complexity of the ZeroSARAH-SGDA algorithm.
\subsection{Data poisoning}
\par The goal of the attacker is to corrupt the training dataset so that predictions on the dataset will be modified in the testing phase when using a machine learning model. Let $\mathcal{D} = \{z_i, t_i\}_{i=1}^n$ denote the training dataset, where $n'\ll n$ samples are corrupted by a perturbation vector $x$, resulting in poisoned training data $z_i + x$. These perturbed samples disrupt the training process and reduce prediction accuracy. Following the setup in \cite{xu2024derivativefree,pmlr-v119-liu20j}, we generate a dataset containing $n = 1000$ samples $\{z_i, t_i\}_{i=1}^n$, where $z_i \in \mathbb{R}^{100}$ are sampled from $\mathcal{N}(\mathbf{0}, \mathbf{I})$. With $\nu_i \sim \mathcal{N}(0, 10^{-3})$, we set
\[
t_i = \begin{cases} 
1, & \text{if} \; 1/(1+e^{-z_i^T \theta^*+\nu_i}) > 0.5, \\
0, & \text{otherwise}.
\end{cases}
\]
\par Here, we choose $\theta^*$ as the base model parameters. The dataset is randomly split into a training dataset $\mathcal{D}_{\text{train}}$ and a testing dataset $\mathcal{D}_{\text{test}}$. The training dataset is further divided into $\mathcal{D}_{\operatorname{tr}, 1}$ and $\mathcal{D}_{\operatorname{tr}, 2}$, where $\mathcal{D}_{\operatorname{tr}, 1}$ (resp. $\mathcal{D}_{\operatorname{tr}, 2}$) represents the poisoned (resp. unpoisoned) subset of the training dataset. The problem can then be formulated as follows:
\begin{align*}
    \underset{\|x\|_{\infty} \leq \epsilon}{\max} \underset{\theta}{\min}\, f(x, \theta;\mathcal{D}_{\text{train}}),
\end{align*}
where $f(x, \theta ; \mathcal{D}_{\text{train}}):=F(x, \theta ; \mathcal{D}_{\operatorname{tr}, 1})+F(0, \theta ; \mathcal{D}_{\operatorname{tr}, 2})$ with 
\begin{align*}
F(x, \theta;\mathcal{D})
=&-\frac{1}{|\mathcal{D}|} \sum_{(z_i,t_i) \in \mathcal{D}}[t_i \log (l(x,\theta;z_i))\\
&+(1-t_i) \log (1-l(x, \theta;z_i))],
\end{align*}
and $l(x, \theta ; z_i)=1/(1+e^{-(z_i+x)^T \theta})$.
\begin{table}
\centering
\caption{Result Comparisons}
\label{mem_table}
\begin{tabular}{|c|c|c|c|}
\hline
Algorithm & space complexity & time (s)  & Testing accuracy \\ \hline
StoGDA     & $\mathcal{O}(1)$    &  5.29        & 0.49                \\ \hline
VR-AGDA     & $\mathcal{O}(n)$   & 9.87          & 0.45                \\ \hline
PVR-SGDA      & $\mathcal{O}(1)$    & 3.2         & 0.38                \\ \hline
ZeroSARAH-SGDA      & $\mathcal{O}(n)$  & 3.08           & 0.4                \\ \hline
\end{tabular}
\end{table}
\par We set the poisoning ratio $|\mathcal{D}_{\text{tr},1}|/|\mathcal{D}_{\text{train}}| = 10\%$, $\epsilon = 2$, and the number of iterations $T = 200$. To compare the performance of different algorithms, including StoGDA, VR-AGDA, the proposed PVR-SGDA and ZeroSARAH-SGDA algorithms, we evaluate the prediction accuracy of the generated models on the testing dataset. 
Fig. \ref{poison_fig} presents the trajectories of prediction accuracy with respect to the number of gradient oracle calls. It is observed that the testing accuracies of the generated models by the proposed PVR-SGDA and ZeroSARAH-SGDA algorithms decrease faster than those of the other StoGDA and VR-AGDA baseline algorithms. In addition, the testing accuracy trajectory of the ZeroSARAH-SGDA algorithm decreases faster than that of PVR-SGDA, which verifies the low gradient complexity of the ZeroSARAH-SGDA algorithm. 
\par Table \ref{mem_table} summarizes the storage space complexity, execution time, and the final prediction accuracy of different comparable algorithms. Notably, the PVR-SGDA algorithm exhibits significantly lower storage demands compared to ZeroSARAH-SGDA, making it the preferable choice in memory-constrained scenarios where gradient computational cost is less of a concern. We note that the computational cost of PVR-SGDA algorithm is closely related to the probability $p$ of full gradient calculation. Finally, the proposed PVR-SGDA and ZeroSARAH-SGDA algorithms demonstrate lower prediction accuracies on the testing dataset compared to  other baseline algorithms, indicating that the proposed algorithms achieve superior attack performance.

\section{Conclusion}
\label{sec:conclusion}

This paper presents two single-loop variance-reduced stochastic gradient algorithms for solving NC-C minimax problems. By incorporating a probability-based variance reduction step, the proposed PVR-SGDA algorithm reduces the number of full gradient calculations, which is typical in deterministic algorithms, and achieves convergence that is robust to gradient variance. Moreover, to completely eliminate the need for full gradient computations, this paper introduces the refined variance reduction step using auxiliary gradient trackers and develops the novel ZeroSARAH-SGDA algorithm with a low gradient complexity. Utilizing the Moreau-Yosida smoothing technique, the two proposed algorithms achieve the best-known complexity of $\mathcal{O}(\epsilon^{-4})$ for NC-C minimax problem, surpassing the $\mathcal{O}(\epsilon^{-6})$ complexity of existing stochastic algorithms. In numerical results, the ZeroSARAH-SGDA algorithm demonstrates better convergence performance than the PVR-SGDA algorithm regarding the number of gradient oracle calls.

\begin{appendices}
\section{Useful Lemmas}
\begin{lemma}\label{x_lem}
For any $y_1,y_2\in \mathcal{Y}$ and $z_1,z_2\in \mathbb{R}^n$,
    there exist constants $\sigma_1,\sigma_2,\sigma_3$ independent of $y$ such that
    \begin{align}
        \|x(y,z_1)-x(y,z_2)\|\leq \ &\sigma_1 \|z_1-z_2\|, \label{z_sub_sigma}\\
        \|x^*(z_1)-x^*(z_2)\|\leq \ &\sigma_1 \|z_1-z_2\|, \label{xstar_z_sub_sigma}\\
        \|x(y_1,z)-x(y_2,z)\|\leq \ &\sigma_2 \|y_1-y_2\|,\label{y_sub_sigma}\\
        \|x_{t+1}-x(y_t,z_t)\|^2 \leq \ &2\eta_x^2\|\nabla_x K_t-v_t\|^2\notag\\
        &+2(1+\omega)^2\|x_t-x_+(y_t,z_t)\|^2, \label{x_plus_sigma}
    \end{align}
    where $\sigma_1\coloneqq \frac{r}{r-L}$, $\sigma_2\coloneqq \frac{2r-L}{r-L}$ and $\omega\coloneqq \frac{\eta_x L+\eta_x r+1}{\eta_x r-\eta_x L}$.
\end{lemma}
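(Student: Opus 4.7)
\medskip
\noindent\textbf{Proof proposal.} All four estimates rest on two recurring ingredients: the $(r-L)$-strong convexity and $(L+r)$-Lipschitz gradient of $K(\cdot,z;y)$ from Lemma~\ref{K_sc_m}, and the joint $L$-Lipschitz continuity of $\nabla f$ from Assumption~\ref{f_assum}. My plan is to dispatch the three Lipschitz-type bounds \eqref{z_sub_sigma}--\eqref{y_sub_sigma} by a uniform variational-inequality (VI) argument at the respective argmins, and then to prove \eqref{x_plus_sigma} separately by a triangle-inequality chase anchored at $x_+(y_t,z_t)$.

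For \eqref{z_sub_sigma}, I would write the first-order optimality conditions for $x(y,z_1)$ and $x(y,z_2)$ as VIs on $\mathcal{X}$ involving $\nabla_x f(\cdot,y)+r(\cdot-z_i)$, test each with the other minimizer, and add. Since the $z_i$-dependence sits only in the quadratic regularizer, the residual reduces to $\langle r(z_2-z_1), x(y,z_1)-x(y,z_2)\rangle$; combined with $(r-L)$-strong monotonicity of $\nabla_x K(\cdot,z;y)+N_{\mathcal{X}}$ and Cauchy--Schwarz, this yields $\sigma_1=r/(r-L)$. The argument for \eqref{xstar_z_sub_sigma} is identical once I observe that $h(\cdot,z)=\max_{y\in\mathcal{Y}} f(\cdot,y)+(r/2)\|\cdot-z\|^2$ is itself $(r-L)$-strongly convex (a pointwise max of $L$-weakly-convex functions plus the strong quadratic), and $z$ again enters only through the quadratic. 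For \eqref{y_sub_sigma}, the perturbation instead comes from the $L$-Lipschitz continuity of $\nabla_x f$ in $y$; the same VI manipulation gives a base Lipschitz constant of $L/(r-L)$, and a triangle inequality introducing an intermediate anchor $x(y_2,z)$ together with a reuse of \eqref{z_sub_sigma} absorbs the slack and delivers $\sigma_2=(2r-L)/(r-L)$.

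For \eqref{x_plus_sigma} my starting point is the decomposition
\[
\|x_{t+1}-x(y_t,z_t)\|\ \le\ \|x_{t+1}-x_+(y_t,z_t)\|\ +\ \|x_+(y_t,z_t)-x(y_t,z_t)\|.
\]
The first summand is $\le\eta_x\|v_t-\nabla_x K_t\|$ by non-expansiveness of $P_{\mathcal{X}}$ applied to the update rule and to $x_+(y_t,z_t)$. For the second summand, I would chain two estimates: (i) a direct Lipschitz bound $\|x_+(y_t,z_t)-x(y_t,z_t)\|\le(1+\eta_x(L+r))\|x_t-x(y_t,z_t)\|$ obtained by non-expansiveness of $P_{\mathcal{X}}$ and the $(L+r)$-smoothness of $\nabla_x K(\cdot,z_t;y_t)$; and (ii) the reverse estimate $\|x_t-x(y_t,z_t)\|\le(\eta_x(r-L))^{-1}\|x_t-x_+(y_t,z_t)\|$ obtained by exploiting that $x(y_t,z_t)$ is a fixed point of the projected-gradient map and that this map is a $(1-\eta_x(r-L))$-contraction on the $(r-L)$-strongly-convex, $(L+r)$-smooth function $K(\cdot,z_t;y_t)$. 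Chaining (i) and (ii) produces exactly the factor $\omega=(1+\eta_x(L+r))/(\eta_x(r-L))$; absorbing an additional $\|x_t-x_+(y_t,z_t)\|$ contribution from the triangle step promotes this to $(1+\omega)$, and a final application of $(a+b)^2\le 2a^2+2b^2$ produces \eqref{x_plus_sigma}.

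The main technical obstacle is step~(ii): establishing the reverse direction requires verifying that, in the step-size regime considered, the projected gradient operator on $K(\cdot,z_t;y_t)$ is a \emph{genuine} contraction with rate $1-\eta_x(r-L)$ rather than merely non-expansive. This is exactly where the algorithmic step-size conditions (in particular an upper bound on $\eta_x$ of the form $\eta_x\lesssim 1/(L+r)$) enter the proof, and care is needed to ensure that the dominating rate in $\max(|1-\eta_x(r-L)|,|1-\eta_x(L+r)|)$ is indeed $1-\eta_x(r-L)$. The first three inequalities, by contrast, are essentially routine consequences of strong monotonicity once the VI set-up is in place.
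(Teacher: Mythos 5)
Your overall architecture matches the paper's. For \eqref{x_plus_sigma} the paper uses exactly your decomposition: the triangle inequality anchored at $x_+(y_t,z_t)$, non-expansiveness of $P_{\mathcal{X}}$ to get $\|x_{t+1}-x_+(y_t,z_t)\|\le\eta_x\|\nabla_x K_t-v_t\|$, and an error bound relating $\|x_t-x(y_t,z_t)\|$ to the natural residual $\|x_t-x_+(y_t,z_t)\|$. The one substantive difference is how that error bound is obtained. The paper invokes the variational-inequality error bound of Pang (Theorem 3.1 of the cited reference), which for an $(r-L)$-strongly monotone, $(r+L)$-Lipschitz map gives $\|x_t-x(y_t,z_t)\|\le\omega\|x_t-x_+(y_t,z_t)\|$ with $\omega=\frac{\eta_x(L+r)+1}{\eta_x(r-L)}$ \emph{for every} $\eta_x>0$; the $(1+\omega)$ then comes from one more triangle inequality. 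Your route instead chains a forward Lipschitz estimate with a reverse contraction estimate for the projected-gradient map. This works and in fact lands on the slightly sharper constant $\omega$ for $\|x_+(y_t,z_t)-x(y_t,z_t)\|$, but, as you yourself flag, step (ii) requires $\eta_x\lesssim 1/r$ so that the contraction rate is governed by $1-\eta_x(r-L)$. That hypothesis is satisfied wherever the lemma is used (the algorithm's step sizes force $\eta_x\le\tfrac{1}{24L}\le\tfrac{1}{6r}$), but the lemma as stated carries no step-size assumption, so the VI error bound is the cleaner tool here. For \eqref{z_sub_sigma}--\eqref{y_sub_sigma} the paper simply cites Lemma A.1 of a prior work; your VI/strong-monotonicity derivation is a valid standalone proof of them. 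One small remark: your direct VI argument for \eqref{y_sub_sigma} already yields the constant $L/(r-L)\le\sigma_2$, which suffices; the proposed "triangle inequality through $x(y_2,z)$ reusing \eqref{z_sub_sigma}" is both circular (that point is one of the two being compared) and unnecessary, so you should drop it rather than try to manufacture the exact value $\sigma_2=\frac{2r-L}{r-L}$.
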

\begin{proof}
    The \eqref{z_sub_sigma}, \eqref{xstar_z_sub_sigma} and \eqref{y_sub_sigma} follow from \cite[Lemma A.1]{li2023nonsmooth}. Then, we prove the inequality \eqref{x_plus_sigma}. 
     Lemma \ref{K_sc_m} shows that the mapping $\nabla_x K(\cdot,z;y)$ is $\left(r-L\right)$-strongly monotone and Lipschitz continuous with constant $\left(r+L\right)$ on the set $\mathcal{X}$. Adopting the proof in  \cite[Theorem 3.1]{Pang_VIp}, we have
\begin{align}\label{ome_x}
   \left\|x_t-x\left(y_t, z_t\right)\right\| \leq \frac{\eta_x L+\eta_x r+1}{\eta_x r-\eta_x L}\left\|x_t-x_+(y_t,z_t)\right\|. 
\end{align}
    Using the triangle inequality yields $\|x(y_t,z_t)-x_+(y_t,z_t)\|\leq \|x(y_t,z_t)-x_t\|+\|x_t-x_+(y_t,z_t)\|\leq \left(1+\omega\right)\|x_t-x_+(y_t,z_t)\|$, where $\omega\coloneqq \frac{\eta_x L+\eta_x r+1}{\eta_x r-\eta_x L}$. In addition, with the non-expansive property of projection operator, we have
    \begin{align}\label{x_x_plus}
        \|x_{t+1}\!-x_+(y_t,z_t)\|=\ &\|P_{\mathcal{X}}(x_t\!-\eta_x v_t)\!-\!P_{\mathcal{X}}(x_t\!-\eta_x \nabla_x K_t)\|\notag\\
        \leq \ & \eta_x\|\nabla_x K_t-v_t\|.
    \end{align}
    Then, combining these two inequalities yields the result \eqref{x_plus_sigma}.
\end{proof}
\begin{lemma} For any $t$, the generated variables satisfy
        \begin{align}
             \|y_t-y_+^t(z_t)\|^2/2\leq\ & \|y_{t+1}-y_t\|^2+2\eta_y^2\|w_t-\nabla_y K_t\|^2\notag\\
             &+2\eta_y^2L^2
        \omega^2\|x_t-x_+(y_t,z_t)\|^2,
        \end{align}
\end{lemma}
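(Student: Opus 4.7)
The plan is to invoke the triangle inequality twice, then apply non-expansiveness of the projection and the Lipschitz continuity of $\nabla_y f$. First I would split
\[
\|y_t-y_+^t(z_t)\|\le \|y_t-y_{t+1}\|+\|y_{t+1}-y_+^t(z_t)\|,
\]
and square this using $(a+b)^2\le 2a^2+2b^2$. This reduces the task to controlling $\|y_{t+1}-y_+^t(z_t)\|^2$.

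Next, recalling $y_{t+1}=P_{\mathcal{Y}}(y_t+\eta_y w_t)$ and $y_+^t(z_t)=P_{\mathcal{Y}}\bigl(y_t+\eta_y\nabla_y K(x(y_t,z_t),z_t;y_t)\bigr)$, the non-expansiveness of $P_{\mathcal{Y}}$ gives
\[
\|y_{t+1}-y_+^t(z_t)\|\le \eta_y\,\|w_t-\nabla_y K(x(y_t,z_t),z_t;y_t)\|.
\]
I would then insert $\nabla_y K_t=\nabla_y K(x_t,z_t;y_t)$ into the right-hand side, apply the triangle inequality a second time, and use $(a+b)^2\le 2a^2+2b^2$ once more. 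This isolates the error term $\|w_t-\nabla_y K_t\|$ from the discrepancy $\|\nabla_y K_t-\nabla_y K(x(y_t,z_t),z_t;y_t)\|$.

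For the latter discrepancy, since $K$ and $f$ differ only by the quadratic $\tfrac{r}{2}\|x-z\|^2$ which has no $y$-dependence in $\nabla_y$, Assumption \ref{f_assum} yields Lipschitz continuity of $\nabla_y K(\cdot,z;y)$ in $x$ with constant $L$, giving
\[
\|\nabla_y K_t-\nabla_y K(x(y_t,z_t),z_t;y_t)\|\le L\,\|x_t-x(y_t,z_t)\|.
\]
Then I would invoke the previous lemma (specifically \eqref{ome_x}) to replace $\|x_t-x(y_t,z_t)\|$ by $\omega\,\|x_t-x_+(y_t,z_t)\|$. Assembling all the pieces and collecting constants produces the claimed bound, with the factor $1/2$ on the left arising from the initial $(a+b)^2\le 2a^2+2b^2$ split.

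There is no real obstacle here; the proof is a routine chain of triangle inequalities and non-expansiveness. The only subtlety is making sure each application of $(a+b)^2\le 2a^2+2b^2$ is tracked so the final constants agree with $\tfrac{1}{2}$, $1$, $2\eta_y^2$, and $2\eta_y^2 L^2\omega^2$ in the stated inequality.
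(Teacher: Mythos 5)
Your proposal is correct and follows essentially the same route as the paper: the paper's use of $\|a-b\|^2\geq\|b\|^2/2-\|a\|^2$ applied to $\|y_{t+1}-y_t\|^2$ is just your triangle-inequality-plus-$(a+b)^2\le 2a^2+2b^2$ step written in reverse, and the bound on $\|y_{t+1}-y_+^t(z_t)\|^2$ via non-expansiveness of $P_{\mathcal{Y}}$, Lipschitzness of $\nabla_y K$ in $x$, and \eqref{ome_x} is exactly the paper's argument. The constants you track all match the stated inequality.
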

where $\omega=\frac{\eta_x L+\eta_x r+1}{\eta_x r-\eta_x L}$.
\begin{proof}
    Using the fact that $\|a-b\|^2\geq \|b\|^2/2-\|a\|^2$, we have
    \begin{align}\label{yt_rel}
        \|y_{t+1}-y_t\|^2
        =&\|y_{t+1}-y_+^t(z_t)+y_+^t(z_t)-y_t\|^2\notag\\
        \geq&\|y_t-y_+^t(z_t)\|^2/2-\|y_{t+1}-y_+^t(z_t)\|^2 \notag\\
        \geq&\|y_t-y_+^t(z_t)\|^2/2-2\eta_y^2\|w_t-\nabla_y K_t\|^2\notag\\
        &-2\eta_y^2L^2
        \omega^2\|x_t-x_+(y_t,z_t)\|^2,
    \end{align}
    where the last equality holds due to the fact that $ \|y_{t+1}-y_+^t(z_t)\|^2\leq\ 2\eta_y^2\|w_t-\nabla_y K_t\|^2+2\eta_y^2L^2
        \omega^2\|x_t-x_+(y_t,z_t)\|^2$ by using the non-expansiveness of the projection operator and  \eqref{ome_x}. 
\end{proof}
\section{Descent of $V$}\label{V_des_proof_Sec}
\begin{lemma}[Primal Descent]
For any $t$, the regularized function $K$ satisfies
\begin{align*}
     &K_t-K_{t+1}\notag\\
     \geq& -\frac{1}{2}\|\nabla_x K_t -v_t\|^2+\left(\frac{1}{\eta_x}-\frac{r+L+1}{2}\right)\|x_{t+1}-x_t\|^2 \notag\\
        &+\langle \nabla_y K(x_{t+1},z_t;y_t),y_t-y_{t+1}\rangle-\frac{L}{2}\|y_{t}-y_{t+1}\|^2 \\
        &+\frac{r\left(2-\rho\right)}{2\rho}\|z_t-z_{t+1}\|^2.
\end{align*}
\end{lemma}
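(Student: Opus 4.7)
The plan is to decompose the one-step change of $K$ into three telescoping pieces, one for each of the $x$-, $y$-, and $z$-updates, and bound each separately:
\begin{align*}
K_t - K_{t+1} = \ & \bigl[K(x_t,z_t;y_t) - K(x_{t+1},z_t;y_t)\bigr] \\
& + \bigl[K(x_{t+1},z_t;y_t) - K(x_{t+1},z_t;y_{t+1})\bigr] \\
& + \bigl[K(x_{t+1},z_t;y_{t+1}) - K(x_{t+1},z_{t+1};y_{t+1})\bigr].
\end{align*}
I expect each target term on the right-hand side of the claimed inequality to come from exactly one of these pieces.

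For the $x$-piece, the plan is to invoke Lemma~\ref{K_sc_m} to get $(L+r)$-smoothness of $K(\cdot,z_t;y_t)$ and write
\[
K(x_t,z_t;y_t) - K(x_{t+1},z_t;y_t) \geq -\langle \nabla_x K_t, x_{t+1}-x_t\rangle - \tfrac{L+r}{2}\|x_{t+1}-x_t\|^2.
\]
Then I would split $-\langle \nabla_x K_t, x_{t+1}-x_t\rangle = -\langle v_t, x_{t+1}-x_t\rangle + \langle v_t-\nabla_x K_t, x_{t+1}-x_t\rangle$. For the first term, the projection update $x_{t+1}=P_{\mathcal{X}}(x_t-\eta_x v_t)$ together with the standard variational inequality (applied at $x=x_t$) yields $\langle v_t, x_{t+1}-x_t\rangle \leq -\tfrac{1}{\eta_x}\|x_{t+1}-x_t\|^2$. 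For the cross term I would use Young's inequality with constant one, giving $\langle v_t-\nabla_x K_t, x_{t+1}-x_t\rangle \geq -\tfrac{1}{2}\|\nabla_x K_t - v_t\|^2 - \tfrac{1}{2}\|x_{t+1}-x_t\|^2$. Assembling these produces exactly the coefficient $\frac{1}{\eta_x} - \frac{L+r+1}{2}$ on $\|x_{t+1}-x_t\|^2$ and the term $-\tfrac{1}{2}\|\nabla_x K_t-v_t\|^2$.

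For the $y$-piece, Assumption~\ref{f_assum} implies that $K(x_{t+1},z_t;\cdot)$ is $L$-smooth (since the quadratic penalty in \eqref{K_func} is independent of $y$), so the descent lemma directly gives
\[
K(x_{t+1},z_t;y_t) - K(x_{t+1},z_t;y_{t+1}) \geq \langle \nabla_y K(x_{t+1},z_t;y_t), y_t-y_{t+1}\rangle - \tfrac{L}{2}\|y_t-y_{t+1}\|^2,
\]
which matches the target. For the $z$-piece, only the quadratic $\tfrac{r}{2}\|x_{t+1}-z\|^2$ depends on $z$, so the difference equals $\tfrac{r}{2}(\|x_{t+1}-z_t\|^2 - \|x_{t+1}-z_{t+1}\|^2)$. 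Using the update $z_{t+1}-z_t = \rho(x_{t+1}-z_t)$, one has $x_{t+1}-z_t = \rho^{-1}(z_{t+1}-z_t)$ and $x_{t+1}-z_{t+1} = (1-\rho)(x_{t+1}-z_t)$, so the difference collapses algebraically to $\tfrac{r(2-\rho)}{2\rho}\|z_t-z_{t+1}\|^2$.

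The main obstacle is bookkeeping in the $x$-piece: the coefficient $\tfrac{1}{\eta_x}-\tfrac{L+r+1}{2}$ and the coefficient $\tfrac{1}{2}$ in front of the gradient-error term are tight only under the specific choice of splitting (Young's with parameter $1$) and the specific way the projection inequality is used. One must resist the temptation to use a different Young parameter, or the constants will not line up. Everything else is a direct application of smoothness and the explicit form of the updates.
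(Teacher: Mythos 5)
Your proposal is correct and follows essentially the same route as the paper's proof: the same three-way telescoping decomposition, the same $(L{+}r)$-smoothness plus projection variational inequality plus Young's inequality with parameter $1$ for the $x$-piece, the same $L$-smoothness descent lemma for the $y$-piece, and the same exact algebraic identity for the $z$-piece. No gaps.
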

\begin{proof}
    By the updating $x_{t+1}=P_{\mathcal{X}}(x_t-\eta_x v_t)$, we have
    \begin{align}\label{pro_xv}
        v_t^\top (x_{t+1}-x_t)\leq -\frac{\|x_{t+1}-x_t\|^2}{\eta_x}.
    \end{align}
    Since $K(\cdot,z;y)$ is $(L+r)$-smooth,
    \begin{align}\label{K_x}
        &K_t-K(x_{t+1},z_t;y_t)\notag\\
        \geq & -\langle \nabla_x K_t,x_{t+1}-x_t\rangle-\frac{r+L}{2}\|x_{t+1}-x_t\|^2\notag\\
        = &-\langle \nabla_x K_t-v_t+v_t,x_{t+1}-x_t\rangle-\frac{r+L}{2}\|x_{t+1}-x_t\|^2\notag\\
        \geq &-\langle \nabla_x K_t-v_t,x_{t+1}-x_t\rangle+\frac{\|x_{t+1}-x_t\|^2}{\eta_x}\notag\\
        &-\frac{r+L}{2}\|x_{t+1}-x_t\|^2\notag\\
        \geq &-\frac{1}{2}\|\nabla_x K_t-v_t\|^2+\left(\frac{1}{\eta_x}-\frac{r+L+1}{2}\right)\|x_{t+1}-x_t\|^2,
    \end{align}
    where the second last inequality holds by \eqref{pro_xv} and the last inequality is from the AM-GM inequality.
    \par Next, because $\nabla_y K(x,z;\cdot)$ is $L$-Lipschitz continuous, we have
    \begin{align}\label{K_y}
        &K(x_{t+1},z_t;y_t)-K(x_{t+1},z_t;y_{t+1})\notag\\
        \geq \ &\langle \nabla_y K(x_{t+1},z_t;y_t),y_t-y_{t+1}\rangle-\frac{L}{2}\|y_{t}-y_{t+1}\|^2.
    \end{align}
    Based on the updating $z_{t+1}=z_t+\rho \left(x_{t+1}-z_t\right)$, it is easy to obtain
    \begin{align}\label{K_z}
        K(x_{t+1},z_t;y_{t+1})-K_{t+1}= \frac{r\left(2-\rho\right)}{2\rho}\|z_t-z_{t+1}\|^2.
    \end{align}
    Combining \eqref{K_x}, \eqref{K_y} and \eqref{K_z}, we obtain
    \begin{align*}
        &K_t-K_{t+1}\\
        \geq& -\frac{1}{2}\|\nabla_x K_t -v_t\|^2+\left(\frac{1}{\eta_x}-\frac{r+L+1}{2}\right)\|x_{t+1}-x_t\|^2 \\
        &+\langle \nabla_y K(x_{t+1},z_t;y_t),y_t-y_{t+1}\rangle-\frac{L}{2}\|y_{t}-y_{t+1}\|^2 \\
        &+\frac{r\left(2-\rho\right)}{2\rho}\|z_t-z_{t+1}\|^2.
    \end{align*}
\end{proof}

\begin{lemma}[{Dual Ascent; c.f. \cite[Lemma B.2]{li2023nonsmooth}}]
    For any $t$, the dual function $d$ satisfies
    \begin{align*}
        &d(y_{t+1},z_{t+1})-d(y_t,z_t)\\
        \geq \ &\langle \nabla_y K(x(y_t,z_t),z_t;y_t),y_{t+1}-y_t\rangle-\frac{L_d}{2}\|y_t-y_{t+1}\|^2\notag\\
        &+\frac{r}{2}\langle z_{t+1}-z_t,z_{t+1}+z_t-2x(y_{t+1},z_{t+1})\rangle,
    \end{align*}
    with $L_d=L+L\sigma_2$.
\end{lemma}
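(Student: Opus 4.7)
The plan is to split the increment of $d$ along the two coordinates separately, handling the $z$-move purely through the quadratic regularizer in $K$ and the $y$-move via the smoothness and concavity of $d(\cdot,z_t)$. Concretely, I would write
\[
d(y_{t+1},z_{t+1})-d(y_t,z_t)=\bigl[d(y_{t+1},z_{t+1})-d(y_{t+1},z_t)\bigr]+\bigl[d(y_{t+1},z_t)-d(y_t,z_t)\bigr]
\]
and bound each bracket from below.

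For the first bracket, the key observation is that only the quadratic term in $K(x,z;y)=f(x,y)+\frac{r}{2}\|x-z\|^2$ depends on $z$. Writing $a=x(y_{t+1},z_{t+1})$, I would use that $d(y_{t+1},z_{t+1})=K(a,z_{t+1};y_{t+1})$ exactly, whereas $d(y_{t+1},z_t)\le K(a,z_t;y_{t+1})$ by the minimizing definition of $d$. Subtracting gives $d(y_{t+1},z_{t+1})-d(y_{t+1},z_t)\ge \tfrac{r}{2}\bigl(\|a-z_{t+1}\|^2-\|a-z_t\|^2\bigr)$, and the polarization identity $\|a-z_{t+1}\|^2-\|a-z_t\|^2=\langle z_{t+1}-z_t,\,z_{t+1}+z_t-2a\rangle$ produces exactly the third term in the stated inequality.

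For the second bracket, I would first establish that $d(\cdot,z_t)$ is concave and has an $L_d$-Lipschitz gradient, then apply the standard concave analogue of the descent lemma. Concavity follows because $K(x,z_t;\cdot)$ is concave in $y$ for each $x$ (as $f$ is concave in $y$ and the quadratic term is $y$-independent) and the pointwise infimum of concave functions is concave. By Lemma~\ref{K_sc_m}, $K(\cdot,z_t;y)$ is strongly convex in $x$, so $x(y,z_t)$ is a unique minimizer and Danskin's theorem yields $\nabla_y d(y,z_t)=\nabla_y K(x(y,z_t),z_t;y)$. Lipschitz continuity of this gradient then drops out of Assumption~\ref{f_assum} combined with the $y$-Lipschitz bound $\|x(y_1,z)-x(y_2,z)\|\le \sigma_2\|y_1-y_2\|$ from Lemma~\ref{x_lem}: a triangle-inequality computation gives Lipschitz constant $L+L\sigma_2=L_d$. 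The concave descent inequality with $z=z_t$ fixed then reads
\[
d(y_{t+1},z_t)-d(y_t,z_t)\ge \langle \nabla_y K(x(y_t,z_t),z_t;y_t),\,y_{t+1}-y_t\rangle-\tfrac{L_d}{2}\|y_{t+1}-y_t\|^2,
\]
which supplies the first two terms in the lemma. Adding the two brackets closes the proof.

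The main obstacle is the Lipschitz-gradient claim for $d(\cdot,z_t)$: one must carefully invoke Danskin (legitimate because of strong convexity from Lemma~\ref{K_sc_m}) and then combine the joint Lipschitz bound on $\nabla_y f$ with the $y$-stability estimate \eqref{y_sub_sigma} in exactly the right way to land on $L_d=L(1+\sigma_2)$. Everything else is straightforward rearrangement, and the two pieces assemble into the stated inequality without loss.
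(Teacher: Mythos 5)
Your proof is correct, and it is essentially the standard argument behind the result the paper cites (the paper itself gives no proof here, deferring to \cite[Lemma B.2]{li2023nonsmooth}): the $z$-increment is handled exactly by the quadratic regularizer plus the suboptimality inequality $d(y_{t+1},z_t)\le K(x(y_{t+1},z_{t+1}),z_t;y_{t+1})$, and the $y$-increment by Danskin (valid since $x(y,z)$ is the unique minimizer by Lemma~\ref{K_sc_m}) together with the descent-lemma lower bound for the $L_d$-smooth function $d(\cdot,z_t)$. The constant $L_d=L(1+\sigma_2)$ indeed follows from Assumption~\ref{f_assum} combined with \eqref{y_sub_sigma}, so no gaps remain.
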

Recall that $Y(z):= \{y\in \mathcal{Y}|\ {\rm argmax}_{y\in \mathcal{Y}} \ d(y,z)\}$ and $P(z):=d(y(z),z)$, where $y(z)\in Y(z)$.
\begin{lemma}[{Proximal Descent; c.f. \cite[Lemma B.3]{li2023nonsmooth}}]
For any $t$, the proximal function $P$ satisfies
    \begin{align*}
        P(z_{t+1})\!-P(z_t)\leq \frac{r}{2}\langle z_{t+1}\!-z_t, z_t+z_{t+1}\!-\!2x(y(z_{t+1}),z_t)\rangle,
    \end{align*}
    where $y(z_{t+1})\in Y(z_{t+1})$.
\end{lemma}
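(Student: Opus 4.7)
The plan is to exploit the Moreau--Yosida-like structure of $P(z)=\max_{y\in\mathcal{Y}}\min_{x\in\mathcal{X}}K(x,z;y)$, in which $z$ enters only through the quadratic penalty $\tfrac{r}{2}\|x-z\|^2$, by sandwiching $P(z_{t+1})-P(z_t)$ between two values of $K$ evaluated at a carefully chosen common test point. First I would write $P(z_{t+1})=d(y(z_{t+1}),z_{t+1})=\min_x K(x,z_{t+1};y(z_{t+1}))$ and upper bound it by plugging in any feasible $x$; the natural choice is $x=x(y(z_{t+1}),z_t)$, giving $P(z_{t+1})\leq K\bigl(x(y(z_{t+1}),z_t),\,z_{t+1};\,y(z_{t+1})\bigr)$.

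Symmetrically, since $y(z_{t+1})$ is one feasible dual point at $z_t$, one has $P(z_t)\geq d(y(z_{t+1}),z_t)$, and the definition of $x(\cdot,\cdot)$ as the inner minimizer yields the equality $d(y(z_{t+1}),z_t)=K\bigl(x(y(z_{t+1}),z_t),\,z_t;\,y(z_{t+1})\bigr)$. Subtracting these two bounds, the $f(x,y)$ contribution cancels because the same $x$ and $y$ appear on both sides, so the entire increment collapses to the quadratic term:
\begin{align*}
P(z_{t+1})-P(z_t)\leq \tfrac{r}{2}\Bigl(\|x(y(z_{t+1}),z_t)-z_{t+1}\|^2-\|x(y(z_{t+1}),z_t)-z_t\|^2\Bigr).
\end{align*}
The elementary identity $\|a-u\|^2-\|a-v\|^2=\langle u-v,\,u+v-2a\rangle$, applied with $a=x(y(z_{t+1}),z_t)$, $u=z_{t+1}$, $v=z_t$, then reproduces exactly the stated inner-product bound.

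The only genuine subtlety is the coordinated choice of test points: the same maximizer $y(z_{t+1})$ is used in bounding $P(z_{t+1})$ above and $P(z_t)$ below, and the same minimizer $x(y(z_{t+1}),z_t)$ appears in both $K$-evaluations. It is precisely this matching that forces the $f$-dependent parts to cancel and isolates the explicit smoothing contribution; choosing, for example, $x(y(z_{t+1}),z_{t+1})$ or $y(z_t)$ on either side would leave an uncontrolled $f$-difference. No Lipschitz or smoothness property of $f$ is invoked: the argument rests only on the additive form $K=f+\tfrac{r}{2}\|x-z\|^2$ and the definitional characterizations of $d$, $P$, $x(\cdot,\cdot)$, and $y(\cdot)$.
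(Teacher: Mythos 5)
Your proof is correct and is essentially the standard argument behind this lemma (the paper itself only cites \cite[Lemma B.3]{li2023nonsmooth} rather than reproving it): upper-bound $P(z_{t+1})$ by evaluating the inner minimum at the suboptimal point $x(y(z_{t+1}),z_t)$, lower-bound $P(z_t)$ by the feasible dual point $y(z_{t+1})$, let the $f$-terms cancel, and apply the identity $\|a-u\|^2-\|a-v\|^2=\langle u-v,\,u+v-2a\rangle$. The coordinated choice of test points you highlight is indeed the whole content of the lemma, and no further assumptions are needed.
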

\textbf{Proof of Lemma \ref{V_ineq_lem}:}
\begin{proof}
    Incorporating above lemmas, we have
    \[
        \begin{aligned}
        &V_t-V_{t+1}\\
        \geq\ & -\frac{1}{2}\|\nabla_x K_t-v_t\|^2+\left(\frac{1}{\eta_x}-\frac{r+L+1}{2}\right)\|x_{t+1}-x_t\|^2\\
        &-\left(\frac{L}{2}+L_d\right)\|y_{t}-y_{t+1}\|^2+\frac{r(2-\rho)}{2\rho}\|z_t-z_{t+1}\|^2 \notag\\
        &\!+\!\underbrace{\langle 2\nabla_y K(x(y_t,z_t),z_t;y_t)\!-\!\nabla_y K(x_{t+1}\!,z_t;y_t),y_{t+1}\!-\!y_t\rangle}_{\text{\ding{192}}} \\
        &+\underbrace{2r\langle z_{t+1}-z_t, x(y(z_{t+1}),z_t))-x(y_{t+1},z_{t+1})\rangle}_{\text{\ding{193}}}.        
        \end{aligned}
        \]
    For the first term, using the update of $y_{t+1}$, we have 
    \[
    \begin{aligned}
        \text{\ding{192}} 
        \geq\ & 2\langle \nabla_y K(x(y_t,z_t),z_t;y_t)\!-\!\nabla_y K(x_{t+1},z_t;y_t),y_{t+1}\!-\!y_t\rangle \\
        &-\frac{1}{2}\|\nabla_y K(x_{t+1},z_t;y_t)-w_t\|^2\\
        &-\frac{1}{2}\|y_{t+1}-y_t\|^2 +\frac{\|y_{t+1}-y_t\|^2}{\eta_y}\\
        \geq \ & 2\langle \nabla_y K(x(y_t,z_t),z_t;y_t)\!-\!\nabla_y K(x_{t+1},z_t;y_t),y_{t+1}\!-\! y_t\rangle \\
        &-\|\nabla_y K_t-w_t\|^2-L^2\|x_{t+1}-x_t\|^2\\
        &-\frac{1}{2}\|y_{t+1}-y_t\|^2 +\frac{\|y_{t+1}-y_t\|^2}{\eta_y}.
    \end{aligned}
    \]
    Moreover, by the Cauchy-Schwarz inequality, we have
    \begin{align*}
        &2 \langle\nabla_y K(x(y_t,z_t),z_t;y_t)-\nabla_y K(x_{t+1},z_t;y_t),y_{t+1}-y_t\rangle\\
        \geq\ & \!-\!2\|\nabla_y K(x(y_t,z_t),z_t;y_t\!)\!-\!\nabla_y K(x_{t+1}\!,z_t;y_t)\|\|y_{t+1}\!-\!y_t\|\\
        \geq\ &\!-\!2 L\|x_{t+1}-x(y_t,z_t)\|\|y_{t+1}-y_t\|\\
        \geq\ & \!-\! L (1+\omega)^2 \|y_{t+1}-y_t\|^2-\frac{2 \eta_x^2 L}{(1+\omega)^2} \|\nabla_x K_t-v_t\|^2\\
        &\!-\! 2 L\|x_t-x_+(y_t,z_t)\|^2,
    \end{align*}
    where the last inequality holds due to \eqref{x_plus_sigma} and the AM-GM inequality. Armed with this inequality, we further upper bound term \ding{192} as follows:
    \[
    \begin{aligned}
         \text{\ding{192}}  \geq & -\|\nabla_y K_t-w_t\|^2-L^2\|x_{t+1}-x_t\|^2\\
         &+\left(\frac{1}{\eta_y}-\frac{1}{2}-L(1+\omega)^2\right)\|y_{t+1}-y_t\|^2\\
         &-\frac{2 \eta_x^2 L}{(1+\omega)^2} \|\nabla_x K_t-v_t\|^2-2 L\|x_t-x_+(y_t,z_t)\|^2.
    \end{aligned}
    \]
    Next, the second term studied in \cite{zhang2020single} is bounded by  
   \[ 
   \begin{aligned}
       \text{\ding{193}}\geq & -2r\sigma_1\|z_{t+1}-z_t\|^2-\frac{r}{6\rho}\|z_{t+1}-z_t\|^2\\
       &-{6r\rho} \|x(y(z_{t+1}),z_{t+1})-x(y_{t+1},z_{t+1})\|^2. 
   \end{aligned}
   \]
  Plugging the upper bound for \ding{192} and \ding{193}, we derive that 
  \begin{equation}
      \label{V_diff_xyz}
  \begin{aligned}
      &V_t-V_{t+1}\\
      \geq\ & -\left(\frac{1}{2}+\frac{2 \eta_x^2 L}{(1+\omega)^2}\right)\|\nabla_x K_t-v_t\|^2\\
      &-\|\nabla_y K_t-w_t\|^2\\
      &+\left(\frac{1}{\eta_x}-\frac{r+L+1}{2}-L^2\right)\|x_{t+1}-x_t\|^2 \\
      &+\left(\frac{1}{\eta_y}-\frac{L+1}{2}-L(1+\omega)^2-L_d\right)\|y_{t+1}-y_t\|^2\\
      &+\left(\frac{5r}{6\rho}-\frac{r}{2}-2r\sigma_1\right)\|z_t-z_{t+1}\|^2 \\
      &-2 L\|x_t-x_+(y_t,z_t)\|^2 \\
      &-{6r\rho} \underbrace{ \|x(y(z_{t+1}),z_{t+1}))-x(y_{t+1},z_{t+1})\|^2}_{\text{\ding{194}}}.
  \end{aligned}
  \end{equation}  
  Note that $x(y(z_{t+1}),z_{t+1})=x^*(z_{t+1})$, we bound \ding{194} as follows:
    \begin{align*}
       \text{\ding{194}}=\ &\|x^*(z_{t+1})\!-\!x^*(z_t)\!+\!x^*(z_t)\!-\! x(y_+^t(z_t),z_t) \\
       &+x(y_+^t(z_t),z_t)-x(y_{t+1},z_t)\\
       &+x(y_{t+1},z_t)-x(y_{t+1},z_{t+1})\|^2\notag\\
        \leq\ &4 \sigma_1^2\|z_t-z_{t+1}\|^2+4\|x^*(z_t)-x(y_+^t(z_t),z_t)\|^2 \\
        &+4\sigma_2^2\|y_+^t(z_t)-y_{t+1}\|^2+4\sigma_1^2\|z_t-z_{t+1}\|^2\notag\\
        \leq\ & 8 \sigma_1^2\|z_t-z_{t+1}\|^2+4\|x^*(z_t)-x(y_+^t(z_t),z_t)\|^2 \\
        &+8\sigma_2^2\eta_y^2\|w_t\!-\!\nabla_y K_t\|^2 \!+\!8\sigma_2^2\eta_y^2L^2\omega^2\|x_t\!-\!x_+(y_t,z_t)\|^2,
    \end{align*}
    where the first inequality holds from Lemma \ref{x_lem} and the last inequality holds by using $ \|y_{t+1}-y_+^t(z_t)\|^2\leq\ 2\eta_y^2\|w_t-\nabla_y K_t\|^2+2\eta_y^2L^2
        \omega^2\|x_t-x_+(y_t,z_t)\|^2$. 
    Then, substituting the above pieces into \eqref{V_diff_xyz} yields that
   \[
    \begin{aligned}
          &V_t-V_{t+1}\\
          \geq\ &\left(\frac{1}{\eta_x}-\frac{r+L+1}{2}-L^2\right)\|x_{t+1}-x_t\|^2  \\
          &+\left(\frac{1}{\eta_y}-\frac{L+1}{2}-L(1+\omega)^2-L_d\right)\|y_{t+1}-y_t\|^2\\
          &+\left(\frac{5r}{6\rho}-\frac{r}{2}-2r\sigma_1-48r\rho \sigma_1^2\right)\|z_t-z_{t+1}\|^2 \\
          &-2 L\left(1+24\omega^2r\rho\sigma_2^2\eta_y^2L\right)\|x_t-x_+(y_t,z_t)\|^2 \\
        &  -\left(\frac{1}{2}+\frac{2 \eta_x^2 L}{(1+\omega)^2} \right) \|\nabla_x K_t-v_t\|^2\\
        &-\left(1+48r\rho \eta_y^2\sigma_2^2\right)\|\nabla_y K_t-w_t\|^2\\
        &-24r\rho \|x^*(z_t)-x(y_+^t(z_t),z_t)\|^2.
    \end{aligned}   
   \]
\end{proof}
\section{DESCENT OF GRADIENT ESTIMATORS}\label{gradient_des_appendix}
\begin{proof}
     Following the proof sketch of  \cite[Lemma 3]{page_p}, we provide the bound for the stochastic gradient estimators. 
    The updating of variable $v$ in Algorithm \ref{vr_agda} reveals that
    \begin{align*}
        &\mathbb{E}_t[\|v_{t+1}-\nabla_x K_{t+1}\|^2]\\
        =\ &\left(1-p\right) \mathbb{E}_t\left[\|v_{t}-\nabla_x K_t+\nabla_x \tilde{K}_{t+1}-\nabla_x\tilde{K}_t\right.\\
        &\left.\qquad -\nabla_x K_{t+1}+\nabla_x K_t\|^2\right]\\
        =\ &\left(1-p\right) \mathbb{E}_t\left[\|\nabla_x \tilde{K}_{t+1}-\nabla_x\tilde{K}_t-\nabla_x K_{t+1}+\nabla_x K_t\|^2\right]\\
        &+(1-p) \|v_t-\nabla_x K_t\|^2\\
        \leq\ &3 \left(1-p\right) \mathbb{E}_t[(L+r)^2\|x_{t+1}-x_t\|^2+L^2\|y_{t+1}-y_t\|^2\\
        &+r^2\|z_{t+1}-z_t\|^2]+\left(1-p\right) \|v_t-\nabla_x K_t\|^2,
    \end{align*}
     where the last inequality holds due to the smoothness of function $K$ and the fact that $\mathbb{E}\|x-\mathbb{E}x\|^2\leq \mathbb{E}\|x\|^2$ for any random variable $x$. Hence, by taking  expectation on both sides and rearrangement, the inequality \eqref{nablax_v} holds. The inequality \eqref{nablay_w} holds by a similar analysis. 
\end{proof}
\section{Descent of $\Phi$ in PVR-SGDA}\label{des_phi_pvr}
\begin{proof}
\par With Lemma \ref{V_ineq_lem}, we bound the difference of $\Phi_t$ between two consecutive steps,
\begin{align*}
    \label{big_ineq}
        &\mathbb{E} [\Phi_t-\Phi_{t+1}]\\
        \geq\ &s_x \mathbb{E}[\|x_{t+1}\!-x_t\|^2]  + s_y\mathbb{E}[\|y_{t+1}\!-y_t\|^2] + s_z\mathbb{E}[\|z_t\!-z_{t+1}\|^2] \\
        &-2 L\left(1+24\omega^2r\rho\sigma_2^2\eta_y^2L\right)\mathbb{E}[\|x_t-x_+(y_t,z_t)\|^2] \\
        &-24r\rho \mathbb{E}[\|x^*(z_t)-x(y_+^t(z_t),z_t)\|^2]\\
        &+\!\left(\!\frac{\gamma}{2p}\!-\!s_v\!\right)\mathbb{E}[\|\nabla_x K_t\!-\!v_t\|^2] \!+\!\left(\!\frac{\gamma}{2p}\!-\!s_w\!\right)\mathbb{E}[\|\nabla_y K_t\!-\!w_t\|^2]\\
        &-\frac{\gamma}{2p}\mathbb{E}[\|\nabla_x K_{t+1}-v_{t+1}\|^2]-\frac{\gamma}{2p}\mathbb{E}[\|\nabla_y K_{t+1}-w_{t+1}\|^2]\\
        \geq\ & \left(s_x-\frac{\left(1-p\right)\gamma \left(3\left(L+r\right)^2+2L^2\right)}{2p}\right) \mathbb{E}[\|x_{t+1}-x_t\|^2]\\
        &+ \left(s_y-\frac{5\left(1-p\right)\gamma L^2}{2p} \right)\mathbb{E}[\|y_{t+1}-y_t\|^2] \\
        &+ \left(s_z-\frac{3\left(1-p\right)\gamma r^2}{2p}\right)\mathbb{E}[\|z_t-z_{t+1}\|^2] \\
        &-2 L\left(1+24\omega^2r\rho\sigma_2^2\eta_y^2L\right)\mathbb{E}[\|x_t-x_+(y_t,z_t)\|^2] \\
        &-24r\rho \mathbb{E}[\|x^*(z_t)-x(y_+^t(z_t),z_t)\|^2]\\
        &+\left( \frac{\gamma}{2}\!-\!s_v\!\right)\mathbb{E}[\|\nabla_x K_t\!-\!v_t\|^2]\!+\!\left( \frac{\gamma}{2}\!-\!s_w\!\right)\mathbb{E}[\|\nabla_y K_t\!-\!w_t\|^2],
\end{align*}
where the second inequality is due to Lemma \ref{lemma:key}.
To make the terms consistent, we further use the inequalities  \eqref{yt_rel} to bound it as follows:
\[
\begin{aligned}
        &\mathbb{E} [\Phi_t-\Phi_{t+1}]\\
        \geq\ &\left(s_x-\frac{\left(1-p\right)\gamma \left(3\left(L+r\right)^2+2L^2\right)}{2p}\right) \mathbb{E}[\|x_{t+1}-x_t\|^2]\\
        &+ \frac{1}{2}\left(s_y-\frac{5(1-p)\gamma L^2}{2p} \right)\mathbb{E}[\|y_+^t(z_t)-y_t\|^2] \\
        &+ \left(s_z-\frac{3\left(1-p\right)\gamma r^2}{2p}\right)\mathbb{E}[\|z_t-z_{t+1}\|^2] \\
        &-2L\left(1+ 24\omega^2r\rho \sigma_2^2 \eta_y^2 L+\eta_y^2L\omega^2(s_y\!-\!\frac{5(1\!-p)\gamma L^2}{2p})\right)\\
        &\qquad \mathbb{E}[\|x_t-x_+(y_t,z_t)\|^2]\\
        &+\left( \frac{\gamma}{2}-s_v\right)\mathbb{E}[\|\nabla_x K_t-v_t\|^2]\\
        &+\left( \frac{\gamma}{2}\!-\!s_w\!-\!2\eta^2_y \left(s_y\!-\!\frac{5\left(1\!-\!p\right)\gamma L^2}{2p} \right)\right)\mathbb{E}[\|\nabla_y K_t\!-\!w_t\|^2]\\
        &-24r\rho \mathbb{E}[\|x^*(z_t)-x(y_+^t(z_t),z_t)\|^2].
\end{aligned}
\]
Before further bound it, we first denote $s_x^+=2L\left(1+ 24\omega^2r\rho \sigma_2^2 \eta_y^2 L+\eta_y^2L\omega^2(s_y-\frac{5(1-p)\gamma L^2}{2p})\right)$ and $s_y^+= s_y-\frac{5\left(1-p\right)\gamma L^2}{2p} $. Then, using $\|x_t-x_+(y_t,z_t)\|^2\leq\ 2\|x_{t+1}-x_t\|^2+2\eta_x^2\|\nabla_x K_t-v_t\|^2$, 
we have 
\[
\begin{aligned}
     &\mathbb{E} [\Phi_t-\Phi_{t+1}]\\
        \geq\ &\left(s_x\!-\!\frac{(1\!-p)\gamma \left(3\left(L\!+\!r\right)^2\!+\!2L^2\right)}{2p}\!-\!2s_x^+\right) \mathbb{E}[\|x_{t+1}\!-x_t\|^2]\\
        &+ \frac{1}{2}s_y^+\mathbb{E}[\|y_+^t(z_t)-y_t\|^2] \\
        &+ \left(s_z-\frac{3\left(1-p\right)\gamma r^2}{2p}\right)\mathbb{E}[\|z_t-z_{t+1}\|^2] \\ 
        &+\left( \frac{\gamma}{2}-s_v-2\eta_x^2 s_x^+\right)\mathbb{E}[\|\nabla_x K_t-v_t\|^2] \\
        &+\left( \frac{\gamma}{2}-s_w-2\eta^2_y s_y^+\right)\mathbb{E}[\|\nabla_y K_t-w_t\|^2]\\
        &-24r\rho \mathbb{E}[\|x^*(z_t)-x(y_+^t(z_t),z_t)\|^2].
\end{aligned}
\]
\par Now, we are ready to bound the coefficients. First, since $ 2L\leq r\leq 4L$, then we have $\sigma_1\leq 2$ and $ \sigma_2\leq 3$. Therefore, we get that $L_d=L\left(1+\sigma_2\right)\leq 4L$. 
\begin{itemize}
    
\item  For coefficient of $\|z_t-z_{t+1}\|^2$, since $\rho$ satisfies $\rho\leq \frac{4p}{1200p+9r \gamma}$, then due to $\sigma_1\leq 2$, we have
\[
\begin{aligned}
    &s_z-\frac{3\left(1-p\right)\gamma r^2}{2p} \\
    =\ & \frac{5r}{6\rho}-\frac{r}{2}-2r\sigma_1-48r\rho \sigma_1^2-\frac{3\left(1-p\right)\gamma r^2}{2p}\\
    \geq\ & \frac{5r}{6\rho}-\frac{393r}{2}-\frac{3\gamma r^2}{2p}\geq \frac{r}{6\rho}.
\end{aligned}
\]
\item Armed with this bound, we can provide the upper bound for $s_w$: 
\begin{align}\label{sw_val}
    s_w=\ & 1+48r\rho \eta_y^2\sigma_2^2\leq 1+\frac{4r\rho}{3 L^2}\leq 1+\frac{1}{25L},
\end{align}
where the first inequality is due to $\eta_y \leq \frac{p}{p(2+18L)}\leq \frac{1}{18L}$ and the second inequality is from $\rho\leq \frac{4p}{1200p+9r\gamma}\leq \frac{1}{300}$ and $r\leq 4L$.
\item Due to $\eta_y\leq \frac{1}{4L(1+\omega)^2}$, we have  $\frac{1}{\eta_y}-L (1+\omega)^2 \geq \frac{3}{4 \eta_y}$.
In addition, we set that $\eta_y \leq \frac{p}{p\left(2+18L\right)+10\gamma L^2}$, then the following holds:
    \begin{align*}
    s_y^+=\ & \frac{1}{\eta_y}-\frac{L+1+2L_d}{2}-L(1+\omega)^2-\frac{5\gamma \left(1-p\right)}{2p}L^2\\
    \geq\  & \frac{3}{4\eta_y}-\frac{1+9L}{2}-\frac{5\gamma }{2p}L^2 \geq \frac{1}{2\eta_y}.
\end{align*}

\item Now, we consider the coefficient of $\|\nabla_y K_t-w_t\|^2$. Recall that $\gamma = 4+\frac{2}{L}$, we have 
\[
\begin{aligned}
    &\frac{\gamma}{2}-s_w-2\eta^2_y \left(s_y-\frac{5\left(1-p\right)\gamma L^2}{2p} \right) \\
    \geq \ & \frac{\gamma}{2}-1-\frac{1}{25L} -2\eta_y\\
    \geq\ & \frac{\gamma}{2}-1-\frac{1}{10L}-\frac{2}{18L} \\
    \geq\ & \frac{\gamma}{2}-1-\frac{1}{2L}\geq \frac{\gamma}{4},
\end{aligned}
\]
where the second inequality is due to the fact that $\eta_y \leq \frac{p}{p(2+18L)}\leq \frac{1}{18L}$.

\item With above lower bounds, we are ready to bound $s_x^+$ as follows:
\begin{align}\label{sx_plus}
s_x^+=\ & 2L\left(1+24\omega^2r\rho \sigma_2^2 \eta_y^2 L+\eta_y^2L\omega^2s_y^+\right)\notag\\
\leq \ & 2L\left(1+24\left(1+\omega\right)^2r\rho\eta_y^2L\sigma_2^2+\left(1+\omega\right)^2\eta_y L\right)\notag\\
\leq\ &  2 L \left(1+\frac{1}{25}+\frac{1}{2}\right)\leq  \frac{9L}{2},
\end{align}
where the second inequality is due to $L(1+\omega)^2\leq \frac{1}{4\eta_y}$ and
\[
\begin{aligned}
    24\left(1+\omega\right)^2\eta_y^2Lr\rho\sigma_2^2\leq 6\eta_y r\rho \sigma_2^2\leq \frac{3r \rho}{L}\leq \frac{1}{25}.
\end{aligned}
\]
\item With the bounds for $s_x^+$ and $s_w$, and the step-size $\eta_x\leq \frac{p}{p(1+24L+2L^2)+80L^2\gamma}$, we find 
\[
\begin{aligned}
    &s_x-\frac{\left(1-p\right)\gamma \left(3\left(L+r\right)^2+2L^2\right)}{2p}-2s_x^+\\
    \geq\ & \frac{1}{\eta_x}-\frac{5L+1}{2}-L^2-\frac{80L^2\gamma}{2p}-9L\\
    \geq\ & \frac{1}{\eta_x}-\frac{1}{2}-12L-L^2-\frac{80L^2\gamma}{2p}\geq\frac{1}{2\eta_x}.
\end{aligned}
\]
\item Notice that $\eta_x\leq \frac{1}{24L}$, it allows us to bound $s_v$ as follows:
\begin{align}\label{sv_val}
    s_v=\ &\frac{1}{2}+\frac{2 \eta_x^2 L}{(1+\omega)^2}\leq \frac{1}{2}+2\eta_x^2L\leq \frac{1}{2}+\frac{2}{576L}\leq \frac{3}{5}.
    \end{align}
\item Then, the coefficient of $\|\nabla_x K_t-v_t\|^2$ can be bounded by 
\[
\begin{aligned}
     \frac{\gamma}{2}-s_v-2\eta_x^2 s_x^+\geq \ & \frac{\gamma}{2}-\frac{3}{5}-9\eta_x^2 L\\
     \geq \ &\frac{\gamma}{2}-\frac{3}{5}- \frac{9}{576L}\\
     \geq \ &\frac{\gamma}{2}-\frac{4}{5}\geq \frac{\gamma}{4}.
\end{aligned}
\]
Here, the first inequality is due to $s_x^+\leq \frac{9L}{2}$, the second inequality is from $\eta_x\leq \frac{1}{24L}$, and the last inequality is derived by $\gamma\geq 4$.
\end{itemize}
\par Combining the above pieces, we have
\begin{align*}
    &\mathbb{E}[\Phi_t-\Phi_{t+1}]\\
    \geq \ & \frac{1}{2\eta_x}\mathbb{E} [\|x_{t+1}-x_t\|^2]+\frac{1}{4\eta_y} \mathbb{E}[\|y_t-y_+^t(z_t)\|^2]\\
    &+\frac{r}{6\rho}\mathbb{E} [\|z_t-z_{t+1}\|^2]+\frac{\gamma}{4}\mathbb{E}[\|\nabla_x K_t-v_t\|^2] \notag\\
        &+\frac{\gamma}{4}\mathbb{E}[\|\nabla_y K_t-w_t\|^2]\\
        &-24r\rho  \mathbb{E}[\|x^*(z_t)-x(y_+^t(z_t),z_t)\|^2],
\end{align*}  
which completes the proof.
\end{proof}
\section{Descent of gradient estimators in ZeroSARAH-SGDA}\label{des_v_nab_proof}
\begin{proof}
Following the proof of \cite[Lemma 2]{Li2021ZeroSARAHEN}, we establish the descent property of gradient estimators. By the update of $v_t$ in \eqref{v_up_rul}, we have
    \begin{align*}
        &\mathbb{E}[\|v_t-\nabla_x K_t\|^2]\\
        =\ & \mathbb{E}\bigg[\bigg\|\frac{1}{b}\sum_{i\in B_t} \left(\nabla_x K_{i,t}-\nabla_x K_{i,t-1}\right)+(1-\lambda)v_{t-1}\!-\!\nabla_x K_t\\
        \ &+\!\lambda\left(\frac{1}{b}\!\sum_{i\in B_t}\!(\nabla_x K_{i,t-1}\!-\!d_{i,t-1})\!+\!\frac{1}{n}\sum_{i=1}^n\! d_{i,t-1}\right)\bigg\|^2\bigg]\\
        =\ &\mathbb{E}\bigg[\bigg\|\frac{1}{b}\sum_{i\in B_t}  \left(\nabla_x K_{i,t}-\nabla_x K_{i,t-1}\right)+\nabla_x K_{t-1}-\nabla_x K_t\\
        &+\!\lambda\Big(\!\frac{1}{b}\!\sum_{i\in B_t}\!\left(\nabla_x K_{i,t-1}\!-\!d_{i,t-1}\right)\!+\!\frac{1}{n}\!\sum_{i=1}^{n}d_{i,t-1}\!-\!\nabla_x K_{t-1}\!\Big)\!\bigg\|^2\bigg]\\
        &+(1-\lambda)^2\mathbb{E}[\|v_{t-1}-\nabla_x K_{t-1}\|^2]\\
        \leq \ &2 \mathbb{E}\bigg[\Big\|\frac{1}{b}\sum_{i\in B_t}  \left(\nabla_x K_{i,t}-\nabla_x K_{i,t-1}\right)+\nabla_x K_{t-1}-\nabla_x K_t\Big\|^2\bigg]\\
        &+2\mathbb{E}\bigg[\lambda^2\Big\|\frac{1}{b}\sum_{i\in B_t}\left(\nabla_x K_{i,t-1}-d_{i,t-1}\right)\\
        &\qquad +\frac{1}{n}\sum_{i=1}^{n}d_{i,t-1}-\nabla_x K_{t-1}\Big\|^2\bigg]\\
        &+(1-\lambda)^2\mathbb{E}[\|v_{t-1}-\nabla_x K_{t-1}\|^2]\\
        \leq \ & \frac{2\lambda^2}{b} \mathbb{E}\left[\frac{1}{n}\sum_{i=1}^n\|\nabla_x K_{i,t-1}-d_{i,t-1}\|\right]^2\\
        &+(1-\lambda)^2\mathbb{E}[\|v_{t-1}-\nabla_x K_{t-1}\|^2]\\
        &+\frac{6(L+r)^2}{b}\mathbb{E}[\|x_{t}-x_{t-1}\|^2]+\frac{6L^2}{b}\mathbb{E}[\|y_t-y_{t-1}\|^2]\\
        &+\frac{6r^2}{b}\mathbb{E}[\|z_t-z_{t-1}\|^2],
    \end{align*}
    where the last inequality holds due to the smoothness of function $K$ and the fact that $\mathbb{E}[\|x-\mathbb{E}x\|^2]\leq \mathbb{E}[\|x\|^2]$ for any random variable $x$. 
    Hence, by rearrangement, the inequality \eqref{nabx_v_diff} holds when $0<\lambda<1$. 
     The inequality \eqref{naby_w_diff} holds by a similar analysis.
\end{proof}
Then, for the term $\mathbb{E}\left[\frac{1}{n}\sum_{i=1}^n\|\nabla_x K_{i,t}-d_{i,t}\|^2\right]$ in the upper bound of \eqref{nabx_v_diff}, we provide the theoretical proof of its descent property shown in Lemma \ref{nab_d_lem}.
\begin{proof}
    According to the proposed algorithm, we have
    \begin{align*}
        &\mathbb{E}\left[\frac{1}{n}\sum_{i=1}^n \|\nabla_x K_{i,t}-d_{i,t}\|^2\right]\\
        =\ &(1-\frac{b}{n})\frac{1}{n}\sum_{i=1}^n\mathbb{E}[\|\nabla_x K_{i,t}-d_{i,t-1}\|^2]\\
        =\ &(1-\! \frac{b}{n})\frac{1}{n}\sum_{i=1}^n\!\mathbb{E}[\|\nabla_x K_{i,t}\!-\!\nabla_x K_{i,t-1}\!+\!\nabla_x K_{i,t-1}\!-\!d_{i,t-1}\|^2]\\
        \leq \ & (1-\frac{b}{n})(1+\beta)\frac{1}{n}\sum_{i=1}^n\mathbb{E}[\|\nabla_x K_{i,t-1}\!-\!d_{i,t-1}\|^2]\\
        &+(1-\frac{b}{n})(1+\frac{1}{\beta})\mathbb{E}\big[3(L+r)^2\|x_{t}-x_{t-1}\|^2\\
        &+3L^2\|y_{t}-y_{t-1}\|^2+3r^2\|z_{t}-z_{t-1}\|^2\big],
    \end{align*}
    where $\beta>0$, the last inequality holds due to the smoothness of function $K$ and Young's inequality. Hence, by rearrangement, the inequality \eqref{nabx_d_diff} holds. Following a similar analysis, the inequality \eqref{naby_h_diff} also holds.
\end{proof}
\section{Descent of $\Phi$ in ZeroSARAH-SGDA}\label{Phi_SARAH_proof}
\begin{proof}
    \par Recall the modified potential function $\Phi_t$ in \eqref{phi_func_def2}. Using the results in \eqref{nabx_v_diff}, \eqref{naby_w_diff}, \eqref{yt_rel}, and \eqref{Evsub}, $\Phi_t-\Phi_{t+1}$ satisfies
    \begin{align}
        &\mathbb{E} [\Phi_t-\Phi_{t+1}]\notag\\
        \geq \ & \left(s_x\!-\!\frac{\gamma\left(6(L+r)^2\!+\!4L^2\right)}{b}\!-\!\tau \xi \left(3(L+r)^2+2L^2\right)\!-\!2 s_x^+ \!\right)\notag\\
        &\qquad \mathbb{E}[\|x_{t+1}-x_{t}\|^2] \notag\\
        &+\!\frac{s_y^+}{2}\mathbb{E}[\|y_{t}\!-y_+^t(z_t)\|^2]\!+\!\left(\!s_z\!-\!\frac{6\gamma r^2}{b}\!-\!3\tau \xi r^2\!\right) \mathbb{E}[\|z_{t+1}\!-\!z_{t}\|^2]\notag\\
        &+(\gamma \lambda-s_v-2\eta_x^2s_x^+)  \mathbb{E}[\|v_t-\nabla_x K_t\|^2] \notag\\
        &+\left(\gamma \lambda-s_w-2\eta_y^2 s_y^+\right)\mathbb{E}[\|w_t-\nabla_y K_t\|^2] \notag \\
    &+\left(\tau\left(1-\zeta\right)-\frac{2\gamma \lambda^2}{b}\right)\mathbb{E}\left[\frac{1}{n}\sum_{i=1}^n \|\nabla_x K_{i,t}-d_{i,t}\|^2\right]\notag\\
    &+\left(\tau\left(1-\zeta\right)-\frac{2\gamma \lambda^2}{b}\right)\mathbb{E}\left[\frac{1}{n}\sum_{i=1}^n \|\nabla_y K_{i,t}-h_{i,t}\|^2\right] \notag \\
    &-24r\rho \kappa \mathbb{E}[\|y_t-y_+^t(z_t)\|],
    \end{align}
    where $s_y^+= s_y-\frac{10\gamma L^2}{b}-5\tau \xi L^2 $ and $s_x^+=2L\left(1+ 24\omega^2r\rho \sigma_2^2 \eta_y^2 L+\eta_y^2L\omega^2(s_y-\frac{10\gamma L^2}{b}-5\tau \xi L^2 )\right)$.
\par Since $ 2L\leq r\leq 4L$, then we have $\sigma_1\leq 2$ and $ \sigma_2\leq 3$. Hence, we get that $L_d=L\left(1+\sigma_2\right)\leq 4L$. 
\begin{itemize}
\item  For coefficient of $\|z_t-z_{t+1}\|^2$, since $\rho$ satisfies $\rho\leq \frac{4b}{1200b+36r \gamma+18\tau r b(1+b)}$ and $\beta=\frac{1}{b}$, then due to $\sigma_1\leq 2$, we have
\[
\begin{aligned}
    &s_z-\frac{6\gamma r^2}{b}-3\tau \xi r^2 \\
    = \ & \frac{5r}{6\rho}-\frac{r}{2}-2r\sigma_1-48r\rho \sigma_1^2-\frac{6\gamma r^2}{b}-3\tau \xi r^2 \\
    \geq \ & \frac{5r}{6\rho}-\frac{393 r}{2}-\frac{6\gamma r^2}{b}-3 \tau r^2 (1+b)\geq \frac{r}{6\rho}.
    \end{aligned} 
\]
\item Due to $\eta_y\leq \frac{1}{4L(1+\omega)^2}$, we have $\frac{1}{\eta_y}-L (1+\omega)^2 \geq \frac{3}{4 \eta_y}$.
In addition, we set that $\eta_y \leq \frac{b}{b\left(2+18L+20\tau L^2 (1+b) \right)+40\gamma L^2}$, then the following holds:
    \begin{align*}
    s_y^+=\ & \frac{1}{\eta_y}\!-\!\frac{L+1+2L_d}{2}\!-\!L(1+\omega)^2\!-\!\frac{10\gamma L^2}{b}\!-5\tau L^2 \xi \\
    \geq\  & \frac{3}{4\eta_y}-\frac{1+9L}{2}-\frac{10\gamma L^2}{b}-5\tau L^2 \left(1+b\right) \geq \frac{1}{2\eta_y}.
\end{align*}
\item Now, we consider the coefficient of $\|\nabla_y K_t-w_t\|^2$. Recall that $s_w\leq 1+\frac{1}{25L}$ in \eqref{sw_val} and $\gamma = \frac{2}{\lambda}+\frac{2}{5\lambda L}$, we have 
\[
\begin{aligned}
    \gamma \lambda-s_w-2\eta^2_y s_y^+ \geq \ & \gamma \lambda-1-\frac{1}{25L} -2\eta_y\\
    \geq\ & \gamma \lambda-1-\frac{1}{25L}-\frac{1}{9L} \\
    \geq\ & \gamma \lambda-1-\frac{1}{5L}\geq \frac{\gamma \lambda}{2},
\end{aligned}
\]
where the second inequality is from $\eta_y \leq \frac{1}{18L}$.

\item With the bounds for $s_x^+$ in \eqref{sx_plus} and $s_w$, if we set $\eta_x\leq \frac{b}{b(1+24L+2L^2)+310L^2\gamma+160(1+b)bL^2\tau}$, we find 
\[
\begin{aligned}
    & s_x-\frac{\gamma\left(6(L+r)^2+4L^2\right)}{b}-2 s_x^+\\
    &-\tau (1-\frac{b}{n})(1+\frac{1}{\beta}) \left(3(L+r)^2+2L^2\right)\\
    \geq\ & \frac{1}{\eta_x}-\frac{5L+1}{2}-L^2-\frac{\gamma\left(6(L+r)^2+4L^2\right)}{b}\\
    &-\tau (1+b) \left(3(L+r)^2+2L^2\right)-9L\\
    \geq\ & \frac{1}{\eta_x}-\frac{1}{2}-12L-L^2-\frac{155\gamma L^2}{b}-(1+b)80\tau L^2\\
    \geq\ &\frac{1}{2\eta_x},
\end{aligned}
\]
where we use $\beta=\frac{1}{b}$ and $r\leq 4L$ in the second inequality.
\item Using $s_v\leq \frac{3}{5}$ in \eqref{sv_val}, the coefficient of $\|\nabla_x K_t-v_t\|^2$ can be bounded by 
\[
\begin{aligned}
     \gamma \lambda-s_v-2\eta_x^2 s_x^+
     \geq \ & \gamma \lambda-\frac{3}{5}- \frac{9}{576L}\\
     \geq \ &\gamma \lambda-\frac{4}{5}\geq \frac{\gamma \lambda}{2}.
\end{aligned}
\]
Here, the first inequality is due to  $s_x^+\leq \frac{9L}{2}$ and $\eta_x\leq \frac{1}{24L}$, and the last inequality is derived by $\gamma> \frac{8}{5\lambda}$.
\item We consider the coefficient of $\frac{1}{n}\sum_{i=1}^n\|\nabla_x K_{i,t}-d_{i,t}\|^2$.
With $\beta=\frac{1}{b}$ and $b=a\sqrt{n}$, $a\geq 2$, the $\tau$ satisfies
\begin{align*}
\tau=\ 2\gamma \lambda^2
>\ & \frac{\gamma\lambda^2}{\sqrt{n}(\frac{1}{2\sqrt{n}}+\frac{1}{n})}\\
>\ &\frac{2\gamma \lambda^2}{b\left(1-\frac{1}{\sqrt{n}}-(1-\frac{b}{n})(1+\beta)\right)}.
\end{align*}
Then, we have
\[
\begin{aligned}
    \tau\left(1-\zeta\right)-\frac{2\gamma \lambda^2}{b}= \ & \tau\left(1-(1-\frac{b}{n})(1+\beta)\right)-\frac{2\gamma \lambda^2}{b} \\
    \geq \ & \frac{\tau}{\sqrt{n}}.
\end{aligned}
\]
\item Given that $\gamma=\mathcal{O}(\frac{1}{\lambda})$ and $\tau=2\gamma \lambda^2$, it follows that $\tau=\mathcal{O}(\lambda)$. Since the upper bounds of the step sizes $\eta_x$, $\eta_y$ and $\rho$ adhere to the form $\mathcal{O}(\frac{1}{\frac{\gamma}{b}+\tau(1+b)})$, we choose $\lambda=\frac{1}{b}$ to ensure an appropriate range of step sizes when $b$ is large.
\end{itemize}

\par For simplicity, we use some notations to denote the positive constant coefficients of each terms,
\begin{align*}
    &\mathbb{E}[\Phi_t-\Phi_{t+1}] \\
    \geq \ & c_x \mathbb{E} [\|x_{t+1}-x_t\|^2]+c_y \mathbb{E}[\|y_t-y_+^t(z_t)\|^2]\\
    &+c_z\mathbb{E} [\|z_t-z_{t+1}\|^2] \\
        &+c_v\mathbb{E}[\|\nabla_x K (x_t,z_t;y_t)-v_t\|^2]\\
        &+c_w\mathbb{E}[\|\nabla_y K(x_t,z_t;y_t)-w_t\|^2]\\
        &+c_\tau \mathbb{E}\bigg[\frac{1}{n}\sum_{i=1}^n\|\nabla_x K_i(x_t,z_t;y_t)-d_{i,t}\|^2\\
        &+\frac{1}{n}\sum_{i=1}^n \|\nabla_y K_i(x_t,z_t;y_t)-h_{i,t}\|^2\bigg]\\
        &-24r\rho \kappa \mathbb{E}[\|y_t-y_+^t(z_t)\|],
\end{align*}
where $c_x=\frac{1}{2\eta_x}$, $c_y=\frac{1}{4\eta_y}$, $c_z=\frac{r}{6\rho}$, $c_v=c_w=\frac{\gamma \lambda}{2}$, $c_{\tau}=\frac{\tau}{\sqrt{n}}$.
    
\end{proof}

\end{appendices}

\bibliographystyle{IEEEtran}
\bibliography{refer}

\end{document}